\newtheorem{thm}{Theorem}[section]
\newtheorem{lemma}[thm]{Lemma}
\newtheorem{proposition}[thm]{Proposition}
\newtheorem{conjecture}[thm]{Conjecture}
\newtheorem{theorem}[thm]{Theorem}
\newtheorem{remark}[thm]{Remark}
\newtheorem{corollary}[thm]{Corollary}
\long\def\oo#1{}
\begin{document}

\title{Lattice-ordered abelian groups finitely generated as semirings}
\author{V\'\i t\v ezslav Kala}
\address{Faculty of Mathematics and Physics, Charles University, Sokolovsk\' a 83,
18600 Praha~8, Czech Republic}
%   Current address
\address{Mathematisches Institut, Georg-August Universit\" at G\" ottingen, Bunsenstra\ss e 3-5, 37073 G\" ottingen, 
Germany}
\email{vita.kala@gmail.com}

%    General info
\subjclass[2010]{Primary 06F20, 12K10; Secondary 06D35, 16Y60, 52B20}

\keywords{Lattice-ordered abelian group, MV-algebra, parasemifield, semiring, finitely generated, order-unit}
\thanks{The author was supported by ERC Starting Grant 258713.}

\begin{abstract}
A lattice-ordered group (an $\ell$-group) $G(\oplus, \vee, \wedge)$ can be naturally viewed as a semiring $G(\vee,\oplus)$.
We give a full classification of (abelian) $\ell$-groups which are finitely generated as semirings, by first showing that each such $\ell$-group has an order-unit so that we can use the results of Busaniche, Cabrer and Mundici \cite{BCM}. 
Then we carefully analyze their construction in our setting to obtain the classification in terms of certain $\ell$-groups associated to rooted trees (Theorem \ref{classify}).

This classification result has a number of important applications: for example it implies a classification of finitely generated ideal-simple (commutative) semirings $S(+, \cdot)$ with idempotent addition and 
provides important information concerning the structure of general finitely generated ideal-simple (commutative) semirings, useful in obtaining
further progress towards Conjecture \ref{main-conj} discussed in \cite{BHJK}, \cite{JKK}.
\end{abstract}

\maketitle

\section{Introduction}\label{sec:1}

Lattice-ordered groups (or $\ell$-groups for short) have long played an important role in algebra and related areas of mathematics.
Let us briefly mention the relation to functional analysis and logic via the correspondence with MV-algebras \cite{M}, \cite{NL}, or the fact that the theory of factorization and divisibility on a B\' ezout domain yields an abelian $\ell$-group. For this and further applications see eg. \cite{AF} or \cite{GH}; the connections to B\' ezout domains were recently studied in detail by Yang \cite{Yang}.

Recently, there have been several interesting results concerning unital $\ell$-groups. For example, 
Busaniche, Cabrer and Mundici \cite{BCM} classified finitely generated unital (abelian) $\ell$-groups $G$ using the combinatorial notion of a stellar sequence, 
which is a sequence $|\Delta_0|\supset |\Delta_1|\supset\dots$ of certain simplicial complexes in $[0,1]^n$. 
The idea is that each such $G$ is of the form $G\simeq\mathcal M([0, 1]^n)/I$, where 
$\mathcal M([0, 1]^n)$ is the $\ell$-group of all piecewise linear functions  $f:[0, 1]^n\rightarrow \mathbb R$ and $I$ is the set of all functions $f$ such that $f(|\Delta_i|)=0$ for some $i$.

The aim of this paper is to explore and use the connections between semirings and $\ell$-groups in the study of simple semirings.
Namely, an $\ell$-group $G(\oplus, \vee, \wedge)$ is also a semiring $G(\vee,\oplus)=S(+, \cdot)$ such that the semiring addition $+$ is idempotent. 
By removing the idempotency condition, one obtains the notion of a parasemifield, i.e., a commutative semiring $S(+, \cdot)$ such that its multiplicative structure forms a group. (See the beginnings of Sections \ref{sec:2} and \ref{sec:3} for precise definitions of the notions concerning $\ell$-groups and semirings, respectively.)

In fact, it is not hard to observe that there is a term-equivalence between lattice-ordered groups and additively idempotent parasemifields (i.e., satisfying $a+a=a$ for all $a$). In particular, this equivalence preserves finite generation in the sense that an $\ell$-group is finitely generated if and only if it is finitely generated as a parasemifield. However, these are not equivalent to the property of being finitely generated as a semiring, which is stronger.

We shall assume all $\ell$-groups and semirings to be automatically commutative, as we will be dealing only with these throughout the paper.

Our first result is Theorem \ref{unital} in which we show that every additively idempotent parasemifield, finitely generated as a semiring, is unital in the $\ell$-group sense. Hence it is natural to inquire whether we can identify the ones, which are finitely generated as semirings, among the unital $\ell$-groups from the classification \cite{BCM}. 
The answer is yes, although the proof is fairly involved and requires a careful discussion of the geometry of stellar sequences. The resulting Theorem \ref{classify} classifies all additively idempotent parasemifields which are finitely generated as semirings.

\

While this seems to be the first paper to systematically study semirings from the perspective of $\ell$-groups and MV-algebras and to apply the strong classification results available therein to semirings, 
there is a long and fruitful tradition of proceeding in the other way, namely, of attaching a semiring to an MV-algebra (note that MV-algebras are equivalent to unital $\ell$-groups via the Mundici functor \cite{M}). 
This was started by Di Nola and Gerla \cite{DNG} who defined an MV-semi\-ring attached to an MV-algebra. 
Belluce and Di Nola \cite{BDN-C} simplified it to an equivalent definition of MV-semirings.
These two authors and Ferraioli \cite{BDNF-B} then 
established a categorical equivalence between MV-algebras and MV-semirings and used it to obtain representation
of MV-algebras as certain spaces of continuous functions via a corresponding representation of MV-semirings. The same authors \cite{BDNF} then very recently continued in the study of (prime) ideals of MV-semirings.
Let us also note that there is a wealth of other interesting representation results for MV-algebras, see eg. \cite{BDN-F}, \cite{BDN-E}, or \cite{GRS}. 
Finally, we remark that Belluce and Di Nola \cite{BDN-C} have also established a connection to ring theory by studying a class of ``\L ukasiewicz rings", which are defined as the rings whose semirings of ideals form an MV-algebra.

Given the basic and fundamental nature of the notion of a semiring,  
it is not surprising that there is a wide variety of other applications of semirings and semifields, ranging from cryptography and other areas of computer science to dequantization, tropical mathematics and geometry -- 
see for example \cite{G}, \cite{litvinov}, \cite{monico}, and \cite{zumbragel} for overviews of some of the applications and for further references.

Many parts of the structural theory of semirings and semifields mimic analogous results concerning rings and fields, see, e.g., \cite{G}. 
However, much less is known overall: for instance, whereas simple commutative rings are just fields and are known very explicitly, the analogous results for semirings are more subtle.
First of all, one has to distinguish between congruence-simple and ideal-simple semirings.
Bashir, Hurt, Jan\v ca\v r\' \i k and Kepka \cite{BHJK} classified the congruence-simple ones and
reduced the study of ideal-simple semirings to the study of parasemifields. 

Together with their results, our Theorem \ref{classify} implies a full classification of additively idempotent finitely generated ideal-simple semirings. 
The structure of this classification follows Theorem 11.2 and Section 12 of \cite{BHJK}, but it is fairly technical, so we don't state the final result explicitly.

\

We have already mentioned that additively idempotent parasemifields are term-equivalent to $\ell$-groups;
the present Theorem \ref{classify} classifies those which are finitely generated semirings.
A natural question to ask then is what is the structure of such parasemifields without the idempotency assumption. Note that the corresponding result concerning rings is that if a field is finitely generated as a ring, then it is finite. 

There are no finite parasemifields and in fact, we have the following conjecture:

\begin{conjecture}[\cite{BHJK}, \cite{JKK}]\label{main-conj}
Every parasemifield which is finitely generated as a semiring is additively idempotent.
\end{conjecture}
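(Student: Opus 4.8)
The statement is a conjecture, so what follows is a strategy, with an indication of where it succeeds unconditionally and where the real difficulty lies.

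\emph{Step 1: reduce to the presence of $\mathbb{Q}^+$.} In any parasemifield $S$ one has $a+a=a(1+1)$ for every $a$, so $S$ is additively idempotent if and only if $1+1=1$. Suppose $1+1\neq 1$ and look at the additive subsemigroup $\langle 1\rangle_+=\{1,\,1{+}1,\,1{+}1{+}1,\dots\}$ of $(S,+)$, writing $n\cdot 1$ for the sum of $n$ copies of $1$. If this semigroup were finite it would, like every finite cyclic semigroup, contain an idempotent $e=n\cdot 1$ with $e+e=e$; but $e$ is invertible in $S$, whence $1+1=e^{-1}(e+e)=e^{-1}e=1$, a contradiction. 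Hence the elements $n\cdot 1$ ($n\ge 1$) are pairwise distinct, i.e. $(\mathbb{N}_{>0},+,\cdot)$ embeds as a subsemiring of $S$. Since each $n\cdot 1$ is a unit, the assignment $p/q\mapsto (p\cdot 1)(q\cdot 1)^{-1}$ is then a well-defined embedding of the parasemifield $\mathbb{Q}^+$ of positive rationals into $S$ (the verifications are routine and use only distributivity and the injectivity just obtained). Thus Conjecture \ref{main-conj} is equivalent to: \emph{no parasemifield containing an isomorphic copy of $\mathbb{Q}^+$ is finitely generated as a semiring}.

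\emph{Step 2: reformulate via the multiplicative group.} Assume now that $S$ is generated as a semiring by $g_1,\dots,g_n$ and that $\mathbb{Q}^+\subseteq S$. First, $(S,\cdot)$ is torsion-free: if $a^k=1$ then $b=1+a+\cdots+a^{k-1}$ satisfies $ab=b$, so $a=1$. Let $\Gamma\le (S,\cdot)$ be the subgroup generated by $g_1,\dots,g_n$; it is finitely generated and torsion-free, hence $\Gamma\cong\mathbb{Z}^r$, and $r\ge 1$ (if $r=0$ then each $g_i=1$ and $S=\mathbb{N}_{>0}$, not a parasemifield). Every element of $S$ is a finite $\mathbb{N}$-combination of monomials in the $g_i$, hence of elements of $\Gamma$, so the group-semiring homomorphism $\mathbb{N}[\Gamma]\twoheadrightarrow S$ is onto; in other words $S$ is a quotient, by a semiring congruence $\theta$, of the semiring $\mathbb{N}[x_1^{\pm 1},\dots,x_r^{\pm 1}]$ of Laurent polynomials with coefficients in $\mathbb{N}$. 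The hypothesis $\mathbb{Q}^+\subseteq S$ then says that for every prime $p$ there is a Laurent polynomial $F_p$ with $p\cdot F_p\equiv 1\pmod\theta$; so $\theta$ must be ``very large''.

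\emph{Step 3: the crux and the obstacle.} What remains — and this is the heart of the conjecture — is to rule this out: a quotient parasemifield of $\mathbb{N}[x_1^{\pm 1},\dots,x_r^{\pm 1}]$ cannot absorb all of $\mathbb{Q}^+$. The natural attempt is to produce an invariant of $\mathbb{N}[x_1^{\pm 1},\dots,x_r^{\pm 1}]$ — a valuation, a ``denominator'', or a coarse grading controlling the supports of Laurent polynomials — that descends through $\theta$ and bounds which rationals $1/p$ can occur in $S$, contradicting that all of them do. Inducting on $r$ looks promising: the case $r=1$ should be accessible by a direct analysis of the one-variable relations forced by invertibility of the generators, and the inductive step would try to factor out one $\mathbb{Z}$-coordinate of $\Gamma$. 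One can also try to exploit Theorem \ref{classify}: the maximal additively idempotent quotient $\bar S$ of $S$ is again a finitely generated semiring, hence a unital $\ell$-group of the kind classified there, and one would aim to show that the congruence collapsing $S$ onto $\bar S$ is trivial, i.e. $S=\bar S$. \textbf{The main obstacle} is precisely the first half of this program: a semiring congruence carries no analogue of subtraction and no Noetherian rigidity, so the congruence $\theta$ that collapses $\mathbb{N}[x_1^{\pm 1},\dots,x_r^{\pm 1}]$ onto a parasemifield can be extremely wild, and it is not clear a priori that \emph{any} nontrivial invariant survives. This is why Conjecture \ref{main-conj} remains open in general, with only partial results — e.g. for parasemifields generated by at most two elements — presently available.
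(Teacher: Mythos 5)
The statement you were asked about is Conjecture \ref{main-conj}: the paper does not prove it, and indeed presents it as open, with only the case of at most two generators settled in \cite{JKK} and three generators treated in the work in progress \cite{KaKo}. So there is no proof in the paper to compare yours against, and you are right not to claim one. Your Steps 1 and 2 are correct and essentially reproduce the paper's own preliminaries: the dichotomy ``either $1+1=1$ or $\mathbb{Q}^+$ embeds'' is Proposition \ref{P1}(a) (the prime subparasemifield $A$ is either trivial or a copy of $\mathbb{Q}^+$), and the presentation of $S$ as a quotient of a polynomial semiring over $\mathbb{N}$ is exactly the setup of Section \ref{sec:3} (the paper uses ordinary rather than Laurent polynomials, which changes nothing essential). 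Your torsion-freeness argument via $b=1+a+\cdots+a^{k-1}$ is also fine.

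Your Step 3 is where the entire content of the conjecture lives, and you correctly flag it as the obstacle rather than papering over it, so there is no hidden gap beyond the one you name. For what it is worth, the second strategy you sketch there --- pass to the maximal additively idempotent quotient $\bar S=S/\!\sim$, apply Theorem \ref{classify} to it, and try to control the congruence collapsing $S$ onto $\bar S$ --- is precisely the program the paper advocates: the key observation (stated in the introduction) is that the cone $\mathcal C(S)\subset\mathbb N_0^m$ equals $\mathcal C(S/\!\sim)$, so the classification of $\bar S$ by rooted trees constrains the geometry of $\mathcal C(S)$ itself; this is how \cite{JKK} handles two generators and \cite{KaKo} three. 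One small inaccuracy: in Step 2, the claim that $r=0$ forces $S=\mathbb N_{>0}$ should instead say that $S$ is then a quotient of $\mathbb N$, and one must separately check that no such quotient is a parasemifield (for instance because there are no finite parasemifields). This does not affect your overall assessment.
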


Je\v zek, Kala and Kepka \cite{JKK} proved this in the case of at most two generators by studying the geometry of semigroups $\mathcal C(S)\subset\mathbb N_0^2$ attached to parasemifields $S$. 
(For the definition and basic information on the semigroups $\mathcal C(S)$, see Section \ref{order-unit}.) 
Since each parasemifield $S$ has an additively idempotent factor $S/\sim$ such that the semigroup $\mathcal C(S)$ is equal to $\mathcal C(S/\sim)$, 
one can use Theorem \ref{classify} to obtain refined information on the structure of the semigroup $\mathcal C(S)\subset\mathbb N_0^m$ in general.

In a work in progress \cite{KaKo}, the author and Korbel\' a\v r use this to prove Conjecture \ref{main-conj} in the case of three generators. 
It seems quite possible that a similar approach will yield a proof of this conjecture in general.
Our Theorem \ref{classify} would then provide all parasemifields, finitely generated as a semiring and hence, again using the results of \cite{BHJK}, imply a complete classification of finitely generated ideal-simple semirings (see \cite{KK} for some details and page \pageref{example} of the present paper for an example).

\

There are various natural ways of extending and generalizing the classification of finitely generated unital $\ell$-groups \cite{BCM}. Let us just mention the cases of $\ell$-groups which are not assumed to be unital, of finitely generated parasemifields, or even of non-commutative finitely generated parasemifields. To the author's knowledge, not much is known about any of these interesting problems.

\

As for the contents of this paper, Section \ref{sec:2} reviews the definitions and basic facts on $\ell$-groups, including the statement of the classification of finitely generated unital ones (and the required notions concerning simplicial and abstract complexes). 
Then in Section \ref{sec:3} we briefly review some preliminaries on semirings and parasemifields and prove that if an additively idempotent parasemifield is finitely generated as a semiring, then it is unital. 
For the sake of completeness we outline the proofs of some classical results concerning semirings that we need.
In Section \ref{sec:4} we then give the classification of additively idempotent parasemifields, finitely generated as semirings.

\section*{Acknowledgments}

The results in this paper form a part of my PhD thesis \cite{disertace} 
written at Charles University, Prague, under the guidance of Professor Tom\' a\v s Kepka. I want to thank him for his help and for our enjoyable research collaboration.
I would also like to greatly thank the anonymous referee for a thorough reading of the manuscript and for a number of useful and detailed remarks and suggestions.

\section{$\ell$-groups and complexes}\label{sec:2}

In this section we briefly review some basics about $\ell$-groups and simplicial complexes that we will need, including the classification of Busaniche, Cabrer and Mundici \cite{BCM}. 
Our treatment is quite terse, but we at least try to provide a brief (very) informal overview at the end of this section. For a more detailed treatment we refer the reader to the paper \cite{BCM}. Also see \cite{CM}, where rational polyhedra are used in the study of projective unital $\ell$-groups.
For more general background information on $\ell$-groups see for example \cite{AF} or \cite{GH}.

A \emph{lattice-ordered abelian group} (\emph{$\ell$-group} for short) $G(+, -, 0, \vee, \wedge)$ is an algebraic structure such that $G(+, -, 0)$ is an abelian group, 
$G(\vee, \wedge)$ is a lattice, and $a+(b\vee c)=(a+b)\vee(a+c)$ for all $a, b, c\in G$. 

An \emph{order-unit} $u\in G$ is an element such that for each $g\in G$ there exists $n\in\mathbb N$ so that $nu\geq g$ (i.e., $nu\vee g=nu$). A \emph{unital $\ell$-group} $(G, u)$ is an $\ell$-group with an order-unit $u$. A \emph{unital $\ell$-homomorphism} is a homomorphism of $\ell$-groups which maps one order-unit to the other one.
An \emph{$\ell$-ideal}  is the kernel of a unital $\ell$-homomorphism; any $\ell$-ideal $I$ then determines the factor-homomorphism $G\rightarrow G/I$.

Let us now review the classification of \cite{BCM}. Denote by $\mathcal M([0, 1]^n)$ the set of piecewise linear continuous functions $f: [0, 1]^n\rightarrow\mathbb R$ such that each piece has integral coefficients (and the number of pieces is finite). $\mathcal M([0, 1]^n)$ is a group under pointwise addition of functions and
we can define $(f\vee g)(x)=\max(f(x), g(x))$ and $(f\wedge g)(x)=\min(f(x), g(x))$ for $f, g\in\mathcal M([0, 1]^n)$. This makes $\mathcal M([0, 1]^n)$ an $\ell$-group with the constant function 1 being an order-unit. Notice that $\mathcal M([0, 1]^n)$ is (finitely) generated (as an $\ell$-group) by the constant function 1 and the projections on $i$-th coordinate $\pi_i: [0,1]^n\rightarrow\mathbb R$ (but it is not finitely generated as a semiring, as we shall see in Corollary \ref{simplex}). Also, for $D\subset [0, 1]^n$ we define $\mathcal M(D)$ as the $\ell$-group whose elements are restrictions $f|D$ of functions $f\in\mathcal M([0, 1]^n)$ to $D$. $\mathcal M(D)$ is thus a factor of $\mathcal M([0, 1]^n)$.
 
The classification then says that each finitely generated unital $\ell$-group is of the form $\mathcal M([0, 1]^n)/I$ for an explicitly defined $\ell$-ideal $I$
(and provides a criterion for when two ideals give the same $\ell$-group). The ideal $I$ comes from a stellar sequence $\mathcal W$ of simplicial complexes as follows: from $\mathcal W$ we construct a sequence $\mathcal P_0\supset \mathcal P_1\supset \mathcal P_2\supset \dots$ of polyhedra in $[0, 1]^n$ and define $I=\{f\in\mathcal M([0, 1]^n)|f(P_i)=0$ for some $i\}$. To give more details we first need to give some definitions concerning (abstract) simplicial complexes, following \cite{BCM}.

We assume the reader is familiar with the usual notion of a (simplicial) complex in $\mathbb R^n$. Let us just note that a \emph{simplex} is a convex hull of a finite set of points, a \emph{$k$-simplex} is a simplex of dimension $k$, a \emph{complex} $\mathcal K$ is a finite set of simplexes such that if $T_1, T_2$ are simplexes with 
$\dim T_1=\dim T_2-1$, $T_1\subset\partial T_2$, and $T_2\in\mathcal K$, then also $T_1\in\mathcal K$ (where by $\partial T$ we denote the boundary of $T$). The \emph{support} $|\mathcal K|$ of a complex $\mathcal K$ is the union of all simplexes in $\mathcal K$. Throughout this paper
we shall often identify a complex with its support.
A simplex $\mathrm{conv}(v_0, \dots, v_k)$ is \emph{rational} if all the coordinates of all the vertices $v_i$ are rational.  A complex is \emph{rational} if all its simplexes are rational. For more background information on simplicial complexes, see for example \cite{Ewald}.

\begin{definition}[\cite{BCM}, page 262]
A (finite) \emph{abstract simplicial complex} is a pair $H=(\mathcal V, \Sigma)$,
where
$\mathcal V$ is a non-empty finite set of vertices of $H$ and $\Sigma$ is a
collection of subsets of $\mathcal V$ whose union is $\mathcal V$ with the property that every subset
of an element of $\Sigma$ is again an element of $\Sigma$. 
Given $\{v, w\}\in\Sigma$ and $a\not\in\mathcal V$ we define the \emph{binary subdivision} $(\{v, w\}, a)$ of $H$ as the abstract simplicial complex $(\{v, w\}, a)H$ obtained by
adding $a$ to the vertex set and replacing every set 
$\{v,w, u_1,\dots, u_t\}\in\Sigma$ by the
two sets $\{v,a, u_1,\dots, u_t\}$ and $\{a,w, u_1,\dots, u_t\}$ and all their subsets. 

A \emph{weighted
abstract simplicial complex} is a triple $W=(\mathcal V, \Sigma, \omega)$ where $(\mathcal V, \Sigma)$ is an abstract
simplicial complex and $\omega$ is a map of $\mathcal V$ into $\mathbb N$. For $\{v, w\}\in\Sigma$ and $a\not\in\mathcal V$, the \emph{binary subdivision} $(\{v, w\}, a)W$ is the abstract simplicial complex
$(\{v, w\}, a)(\mathcal V, \Sigma)$ equipped with the weight function 
$\tilde\omega: \mathcal V\cup\{a\}\rightarrow\mathbb N$
given by $\tilde\omega(a)=\tilde\omega(v)+\tilde\omega(w)$ and $\tilde\omega(u)=\omega(u)$ for all $u\in\mathcal V$.
\end{definition}

\begin{definition}[\cite{BCM}, page 264]
Let $W=(\mathcal V, \Sigma, \omega)$ and $W^\prime$ be two weighted abstract simplicial complexes. A map $\mathfrak b: W\rightarrow W^\prime$ is a \emph{stellar transformation} if it is either a deletion of a maximal set of $\Sigma$ or a binary subdivision or the identity map.

A sequence $\mathcal W=(W_0, W_1, W_2, \dots)$ of  weighted abstract simplicial complexes is \emph{stellar} if $W_{i+1}$ is obtained from $W_i$ by a stellar transformation.
\end{definition}

\begin{definition}[\cite{BCM}, page 263]
Let now $W=(\mathcal V, \Sigma, \omega)$ be an abstract simplicial complex with the set of vertices $V=\{v_1, \dots, v_n\}$. 
Choose the standard basis $e_1, \dots, e_n$ of $\mathbb R^n$ 
and let $\Delta_W$ be the complex whose vertices are 
$v_1^\prime=e_1/\omega(v_1), \dots,$ $v_n^\prime=e_1/\omega(v_n)$ and whose $k$-dimensional simplexes are given by 
$\mathrm{conv}(v_{i(0)}^\prime, \dots, v_{i(k)}^\prime)$ $\in\Delta_W$ if and only if $\{v_{i(0)}, \dots, v_{i(k)}\}\in\Sigma$.

Then $\Delta_W$ is a complex, $|\Delta_W|\subset [0,1]^n$ and we have a 
map $\iota: \mathcal V\rightarrow |\Delta_W|$ 
given by $\iota(v_i)=v_i^\prime$, the so called \emph{canonical realization} of $W$.
\end{definition}

\begin{definition}[\cite{BCM}, pages 256-257]
Let $\mathcal K$ be a complex and $p\in|\mathcal K|\subset \mathbb R^n$ a point in $\mathcal K$. The \emph{blow-up} $\mathcal K_{(p)}$ of $\mathcal K$ at $p$ is 
the complex obtained by replacing each simplex $T\in\mathcal K$ that contains $p$ by the set of all simplexes of the form $\mathrm{conv}(F\cup\{p\})$, where $F$ is any face of $T$ 
not containing $p$.

For a rational 1-simplex $E=\mathrm{conv}(v, w)\in\mathbb R^n$ we define the \emph{Farey mediant} of $E$ as the rational point
 $u=\frac{\mathrm{den}(v)v+\mathrm{den}(w)w}{\mathrm{den}(v)+\mathrm{den}(w)}\in E$
(where $\mathrm{den}(v)$ denotes the least common denominator of the coordinates of a vector $v$).

If $E$ belongs to a rational complex $\mathcal K$ and $v$ is the Farey mediant of $E$, the (binary) \emph{Farey blow-up} is the blow-up $\mathcal K_{(v)}$.
\end{definition}

\begin{remark}[\cite{BCM}, Lemma 4.4]\label{4.2.5}
Note that if $\mathcal W=(W_0, W_1, W_2, \dots)$ is a stellar sequence of weighted abstract simplicial complexes and $\iota_0: \mathcal V_0\rightarrow |\Delta_0|$ the canonical realization, we can naturally extend this to attach a complex $\Delta_i=\Delta_{W_i}$ to each $W_i$:

Let $\mathfrak b_0: W_0\rightarrow W_1$ be the given stellar transformation. We define $\Delta_1$ as follows: If $\mathfrak b_1$ deletes a maximal set $M\in\Sigma$, we delete the corresponding maximal simplex from $\Delta_{0}$. 
If $\mathfrak b_1$ is a binary subdivision $(\{a, b\},c)W_0$ at some $E=\{a,b\}\in\Sigma$, let $e$ be the Farey mediant of the 1-simplex 
$\mathrm{conv}(\iota_0(E))$. Then  $\Delta_1$ is the Farey blow-up of $\Delta_0$ at $e$.
If $\mathfrak b_1$ does not do anything, we also keep $\Delta_{0}$ unchanged.

In all cases we accordingly modify $\iota_0$ to obtain a realization $\iota_1: \mathcal V_1\rightarrow |\Delta_{1}|$.
Then we can continue by considering $\mathfrak b_1: W_1\rightarrow W_2$, and so on.

Eventually we get a sequence of complexes corresponding to $[0,1]^n\supset |\Delta_0|\supset |\Delta_1|\supset \dots$.
\end{remark}

\begin{definition}[\cite{BCM}, Lemma 2.3]
Given a sequence $\mathcal P=(P_1\supset P_2\supset\dots)$ of subsets of $[0,1]^n$, define an $\ell$-ideal $I=I(\mathcal P)$ of $\mathcal M([0,1]^n)$ 
by $I(\mathcal P)=\{f\in\mathcal M([0,1]^n)|f(P_i)=0$ for some $i\}$. This gives an $\ell$-group $\mathcal M([0,1]^n)/I(\mathcal P)$.
\end{definition}

\begin{theorem}[\cite{BCM}, Theorem 5.1]
For every finitely generated unital $\ell$-group $(G,u)$ there is a stellar sequence $\mathcal W=(W_0, W_1, W_2, \dots)$ such that 
$(G,u)\simeq \mathcal G(\mathcal W)$, where $\mathcal G(\mathcal W)=\mathcal M([0,1]^n)/I$ for $I=$ the ideal corresponding to the sequence
$[0,1]^n\supset |\Delta_0|\supset |\Delta_1|\supset \dots$ defined using $\mathcal W$ as in \ref{4.2.5}.
\end{theorem}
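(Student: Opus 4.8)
The statement is the classification theorem of \cite{BCM} quoted above, so the plan is to reconstruct their argument, which splits into a soft reduction and one genuinely hard geometric step. I would first pass through Mundici's equivalence $\Gamma$ between unital $\ell$-groups and MV-algebras \cite{M}: being finitely generated as a unital $\ell$-group means that the MV-algebra $A=\Gamma(G,u)$ is generated by finitely many elements, say $n$ of them. Since the free $n$-generated MV-algebra is the MV-algebra of McNaughton functions on $[0,1]^n$, which under $\Gamma^{-1}$ is precisely $(\mathcal M([0,1]^n),1)$, there is a surjective unital $\ell$-homomorphism $\mathcal M([0,1]^n)\twoheadrightarrow(G,u)$, so $(G,u)\simeq\mathcal M([0,1]^n)/I$ for some $\ell$-ideal $I$. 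Because $\mathcal M([0,1]^n)$ is countable, $I$ is $\ell$-generated by a countable set $f_1,f_2,\dots$; setting $P_k=\{x\in[0,1]^n:(|f_1|\vee\cdots\vee|f_k|)(x)=0\}$ gives a decreasing sequence $\mathcal P=(P_1\supseteq P_2\supseteq\cdots)$ of rational polyhedra, and a Nullstellensatz for $\mathcal M([0,1]^n)$ (a piecewise-linear function vanishing on the zero set $Z(g)$ is dominated in absolute value by a multiple of $|g|$) yields $I=I(\mathcal P)$. This is essentially \cite[Lemma~2.3]{BCM}.

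By this reduction it suffices to build a stellar sequence $\mathcal W$ whose associated chain $[0,1]^n\supseteq|\Delta_0|\supseteq|\Delta_1|\supseteq\cdots$ from Remark~\ref{4.2.5} is cofinal with $\mathcal P$, for then $I(\{|\Delta_i|\})=I(\mathcal P)=I$. I would construct $\mathcal W$ by handling the $P_k$ one at a time, using two geometric facts. First: every rational polyhedron $P\subseteq[0,1]^n$ is the support of a unimodular (regular) rational complex, and any such complex is reachable from the standard triangulation of $[0,1]^n$ by finitely many Farey blow-ups --- the combinatorial shadow of toric resolution of singularities (De~Concini--Procesi), whose key content is that unimodular triangulations of a fixed rational polyhedron are connected under stellar subdivisions, so one never needs an ``inverse'' move; see also \cite{CM}. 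Second: cutting down a complex supporting $P_k$ to one supporting the smaller $P_{k+1}$ is done by deletions of maximal simplices. Interleaving Farey blow-ups (refining until the current complex is adapted to $P_{k+1}$) with such deletions, and translating each geometric Farey blow-up back into a binary subdivision of abstract weighted complexes --- with weight $\omega(v)=\mathrm{den}(v)$, exactly matching the dictionary of Remark~\ref{4.2.5} --- produces the required $\mathcal W$.

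Granting this, I would finish by checking that the realizations $\iota_i$ and the weights are propagated consistently through the infinitely many stellar transformations and that replacing a sequence of polyhedra by a cofinal subsequence leaves the ideal unchanged; together with the first paragraph this gives $\mathcal G(\mathcal W)\simeq\mathcal M([0,1]^n)/I\simeq(G,u)$. The hard part is the geometric core of the middle step: showing that an arbitrary rational polyhedron --- and then, coherently, a whole decreasing sequence of them --- can be obtained from the standard complex using only Farey blow-ups and deletions, and scheduling the operations so that at every stage $|\Delta_i|$ occupies the correct place in the chain. By contrast the bookkeeping (tracking $\iota_i$ and the weights, and the equivalence criterion on stellar sequences that \cite{BCM} also needs for the uniqueness half of the classification) is routine, if lengthy.
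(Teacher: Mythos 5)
This statement is quoted verbatim from \cite{BCM} (their Theorem 5.1) and the paper gives no proof of it, so there is nothing internal to compare against; your reconstruction does, however, follow the actual strategy of \cite{BCM}: the reduction to $\mathcal M([0,1]^n)/I$ via McNaughton/Mundici, the Wójcicki--Beynon ``Nullstellensatz'' turning $I$ into $I(\mathcal P)$ for a decreasing sequence of rational polyhedra, and the De~Concini--Procesi-type fact that each such polyhedron is reachable from the standard triangulation by Farey blow-ups followed by deletions of maximal simplices. You have also correctly isolated the genuinely hard step (coherently scheduling the blow-ups and deletions across the whole decreasing chain), so the outline is sound as a proof sketch of the cited result.
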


All this is not nearly as complicated as it sounds: we start with suitable complex $\Delta_0$  
and then modify it in infinitely many steps. In each step we either 

\begin{itemize}
\item delete a maximal simplex from the previous complex, or 

\item suitably divide a 1-dimensional simplex $E$ into two (and then we have to correspondingly divide all the simplexes containing $E$), or

\item don't do anything.
\end{itemize}

This produces a sequence $[0,1]^n\supset |\Delta_0|\supset |\Delta_1|\supset\dots$ and we define $G=\mathcal M([0, 1]^n)/I$, where $I$ is the set of all functions $f$ such that $f(|\Delta_i|)=0$ for some $i$. Every finitely generated unital $\ell$-group is obtained in this way.

\section{Existence of order-unit}\label{order-unit}\label{sec:3}

Let us now review the connection between $\ell$-groups and semirings.

\nopagebreak

By a (commutative) \emph{semiring} we shall mean a non-empty set $S$ equipped with two associative and commutative operations (addition and multiplication) where 
the multiplication distributes over the addition from both sides. We shall be dealing with commutative semirings only, so we usually just call them semirings. Note that our definition of a semiring is slightly more general from the one used in the context of MV-semirings (see eg. \cite{BDNF-B}) in that we don't require a semiring to contain 0 or 1.

A non-trivial semiring $S$ is a \emph{parasemifield} if the multiplication
defines a non-trivial group. A non-trivial semiring $S$ is a \emph{semifield} if there is an element $0\in S$ such that $0\cdot S=0$ and such that the set $S\backslash\{0\}$ is a group (for the semiring multiplication).

A semiring is \emph{additively idempotent} if $x+x=x$ for all $x\in S$.

As we mentioned already in the introduction, there is a well-known term-equiva\-len\-ce (and hence a categorical isomorphism) between additively idempotent parase\-mifields and $\ell$-groups. We shall use this to switch between the languages of parase\-mifields and $\ell$-groups, sometimes without explicitly mentioning it.

\begin{proposition}[\cite{WW}, \cite{W}]\label{3.1}
There is a term-equivalence between additively idempotent parasemifields and $\ell$-groups.
\end{proposition}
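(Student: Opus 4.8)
The plan is to write down the two mutually inverse term operations and then verify that one side satisfies the parasemifield axioms and the other the $\ell$-group axioms. In the direction from $\ell$-groups to parasemifields, given an $\ell$-group $G(+,-,0,\vee,\wedge)$ we make the underlying set into a semiring by taking $\vee$ as the semiring addition and $+$ as the semiring multiplication. Then the semiring addition is associative, commutative and idempotent (since $a\vee a=a$), the semiring multiplication is the abelian group operation and hence defines a group, and the distributive law $a+(b\vee c)=(a+b)\vee(a+c)$ is exactly the compatibility axiom of an $\ell$-group. Thus $G(\vee,+)$ is an additively idempotent parasemifield, and the two operations used are terms of the $\ell$-group language.

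In the direction from parasemifields to $\ell$-groups, let $S$ be an additively idempotent parasemifield with addition $\oplus$ and multiplication written multiplicatively. We take the group operation to be this multiplication, with identity $1$ and inverse $x\mapsto x^{-1}$, and we set $a\vee b:=a\oplus b$ and $a\wedge b:=(a^{-1}\oplus b^{-1})^{-1}$; all three are terms of the semiring language. To check that this is an $\ell$-group I would proceed in three steps. First, introduce the relation $a\le b:\Longleftrightarrow a\oplus b=b$ and verify, using only commutativity, associativity and idempotency of $\oplus$, that it is a partial order for which $a\oplus b$ is the least upper bound of $a$ and $b$ (idempotency gives reflexivity and $a,b\le a\oplus b$; antisymmetry and transitivity are immediate; minimality of the upper bound uses associativity). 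Second, using semiring distributivity, show that $a\le b$ implies $ac\le bc$ for every $c$; specializing to $c=a^{-1}b^{-1}$ yields $b^{-1}\le a^{-1}$, so inversion is an order anti-automorphism, and therefore $(a^{-1}\vee b^{-1})^{-1}$ is the greatest lower bound of $a$ and $b$, i.e.\ $(S,\vee,\wedge)$ is a lattice. Third, $a(b\vee c)=ab\vee ac$ is just the semiring distributive law (and $a(b\wedge c)=ab\wedge ac$ follows by conjugating that identity with inversion), which gives the $\ell$-group compatibility axiom; hence $S$ becomes an $\ell$-group.

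Finally, one checks that the two assignments are mutually inverse. Passing from an $\ell$-group to the parasemifield and back recovers the same group operation $+$ and the same join $\vee$, while the recovered meet $(a^{-1}\vee b^{-1})^{-1}$ coincides with the original $\wedge$ by the standard $\ell$-group identity $a\wedge b=-\bigl((-a)\vee(-b)\bigr)$; passing from a parasemifield to the $\ell$-group and back recovers the multiplication as the group operation and the addition $\oplus$ as the join. Since both composites are the identity and all operations involved are terms in the respective languages, this establishes the term-equivalence. I expect the only genuinely non-formal point to be the second step above — that inversion reverses the order, which is precisely what makes the formula for $\wedge$ yield an honest lattice meet that is moreover translation-invariant; everything else is a routine unwinding of the definitions, with full details available in \cite{WW} and \cite{W}.
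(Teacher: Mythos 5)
Your proposal is correct and follows essentially the same route as the paper: the same pair of mutually inverse term definitions ($\vee$ as addition, the group operation as multiplication, and $a\wedge b=(a^{-1}\vee b^{-1})^{-1}$), with the paper merely asserting the axiom verifications that you spell out. The extra detail you supply (the order defined by $a\oplus b=b$, translation-invariance, and inversion reversing the order) is exactly the routine checking the paper leaves implicit, so there is nothing to add.
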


\begin{proof}
Let $S(+, \cdot, ^{-1}, 1)$ be an additively idempotent parasemifield and define $a\vee b=a+b$, $a\wedge b=(a^{-1}+b^{-1})^{-1}$. Then $S(\cdot, ^{-1}, 1, \vee, \wedge)$ is an $\ell$-group.
Conversely, if $S(\cdot, ^{-1}, 1, \vee, \wedge)$ is an $\ell$-group (written multiplicatively), then $S(+, \cdot, ^{-1}, 1)$ is an additively idempotent parasemifield, where $a+b=a\vee b$. 
We see that every basic operation on an $\ell$-group is a term operation on an additively idempotent parasemifield and vice versa. This implies that these two classes of algebras have the same clones of operations, i.e., they are term-equivalent.
\end{proof}

We define a (pre-)ordering $\leq$ on a semiring $S$ by $a\leq b$ if and only if $a=b$ or there exists $c\in S$ such that $a+c=b$. Note that it is preserved by addition and multiplication in $S$. Also, this is the same ordering as the one on the corresponding $\ell$-group. 

Note that if $S$ is a parasemifield, then the ordering $\leq$ on $S$ is antisymmetric:

\begin{proposition}[\cite{G}, Proposition 20.37]\label{P3}

Let $S$ be a parasemifield.
For all $a, b, c\in S$ we have:

a) If $a+b+c=a$, then $a+b=a$.

b) If $a\leq b\leq a$, then $a=b$.
\end{proposition}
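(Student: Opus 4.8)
The plan is to work purely within the semiring axioms for a parasemifield $S$, using only that multiplication is a group and that addition is commutative, associative, and distributed over by multiplication; part (b) will follow quickly from part (a), so the substance is part (a).

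For part (a), suppose $a+b+c=a$. The key idea is to multiply this identity by a carefully chosen element and iterate. Since the multiplication in $S$ is a group, the element $a$ has an inverse $a^{-1}$, and multiplying the hypothesis by $a^{-1}$ gives $1+a^{-1}b+a^{-1}c=1$, so without loss of generality we may assume $a=1$, i.e.\ $1+x+y=1$ where $x=a^{-1}b$, $y=a^{-1}c$; the goal becomes $1+x=1$. Now I would exploit that from $1+x+y=1$ one can, multiplying by $1+x$ (or by powers of it) and using distributivity, derive $1+x$ as a summand in an expression that still equals $1$. Concretely, multiply $1+x+y=1$ by $(1+x)$: the left side expands (using distributivity and $x\cdot 1 = x$) to $1+x + x + x^2 + y + xy = 1+x$ after collecting, but one has to be careful because there is no idempotency; the honest computation is $(1+x+y)(1+x) = 1+x+x+x^2+y+xy$, and this equals $1+x$. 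So $1 + x + (x + x^2 + y + xy) = 1+x$. The point of such manipulations is to reach a relation of the form $(1+x) + z = 1$ for some $z\in S$, i.e.\ $1+x\le 1$; combined with the trivial $1 \le 1+x$ (witnessed by the summand $x$), antisymmetry of $\le$ — which is exactly what part (b) asserts and which must therefore be proved simultaneously or bypassed — would give $1+x=1$.

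To avoid circularity I would instead aim directly: from $1+x+y=1$ I want to produce $1+x=1$ on the nose. Here is the cleaner route. Set $s = 1+x$. Multiply the hypothesis $1+x+y=1$ through by $s$ repeatedly: $s = s\cdot 1 = s(1+x+y) = s + sx + sy$. Thus $s = s + (sx+sy)$, i.e.\ writing $t = sx+sy$ we have $s+t=s$. Iterating, $s = s + nt$ for all $n\ge 1$ (where $nt = t+\dots+t$). Now multiply $s+t=s$ by $t^{-1}s$ — no: the trick that makes these arguments work in parasemifields is that from $s+t=s$ one gets, after multiplying by $s^{-1}$, that $1 + s^{-1}t = 1$, and one shows any such "infinitesimal" relation $1+u=1$ forces, upon multiplying by $1+u$ again and comparing, a contradiction unless it collapses; but in fact $1+u=1$ is consistent (e.g.\ it can genuinely hold), and that is fine — we actually *want* to conclude $1+x=1$, and $1+x = s$, and we have $s = s+t$ with $t=s(x+y)$, hence $s(1) = s(1+x+y)$ once more just reproduces the hypothesis. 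The honest extra input needed is: from $1+x+y=1$, multiplying by $x$ gives $x + x^2 + xy = x$, so $x + (x^2+xy) = x$; adding $1$: $1+x+(x^2+xy) = 1+x$; but also $1+x+x^2+xy$: substitute $x^2+xy = x\cdot(x+y)$ and use $x+y = $ (from hypothesis) the element with $1+(x+y)=1$. So $1+x+x(x+y) = 1+x$ and $1 + (x+y) = 1$ means $x+y \le $ is "added to $1$ trivially"; applying the hypothesis-form once more to collapse $x(x+y)$ requires multiplying $1+(x+y)=1$ by $x$ to get $x + x(x+y) = x$, hence $1+x+x(x+y) = 1+x+ (\text{something that was just shown to add to }x\text{ to give }x)$.

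The main obstacle, and the step I expect to require the real cleverness, is precisely this bootstrapping: turning the three-term collapse $a+b+c=a$ into the two-term collapse $a+b=a$ without invoking antisymmetry (which is part (b) and must be derived *from* part (a), not assumed). I anticipate the correct argument multiplies the hypothesis by a well-chosen group element — likely $a^{-1}$ to normalize to $1+x+y=1$, then $(1+x)^{-1}$ or $x^{-1}$ — to manufacture a second relation, and then adds or multiplies the two relations so that the $c$-term (resp.\ $y$-term) is absorbed, yielding $1+x=1$ directly. Once part (a) is in hand, part (b) is immediate: if $a\le b\le a$ with $a\ne b$, write $a + u = b$ and $b + v = a$; then $a + u + v = a$, so by part (a) $a + u = a$, i.e.\ $b = a$, a contradiction, proving antisymmetry.
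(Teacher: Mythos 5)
Your reduction of part (b) to part (a) is correct and matches the paper exactly: from $b=a+u$ and $a=b+v$ one gets $a+u+v=a$, hence $a+u=a$ by part (a), i.e.\ $b=a$. You also correctly diagnose the central difficulty in part (a) --- that antisymmetry of $\leq$ cannot be invoked because it \emph{is} part (b) --- and you correctly guess the general shape of the argument (normalize by $a^{-1}$, multiply by a well-chosen element, combine relations). But the proof of part (a) is never actually completed: you explicitly defer ``the step I expect to require the real cleverness,'' and none of the manipulations you try (multiplying $1+x+y=1$ by $1+x$, or by $s=1+x$ repeatedly) gets there; they only reproduce relations of the form $s+t=s$ for various $t$, which is where you started. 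That is a genuine gap, not a stylistic one.

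The missing idea is a factorization that lets you trade the unavailable \emph{additive} cancellation for the available \emph{multiplicative} cancellation. In your normalized form: multiply $1+x+y=1$ by $x$ to get $x+x^{2}+xy=x$, then add $y$ to both sides to get $x+x^{2}+xy+y=x+y$. The left-hand side factors as $(x+y)(1+x)$, so $(x+y)(1+x)=x+y$. Since $S$ is a parasemifield, $x+y$ is multiplicatively invertible, and dividing by it gives $1+x=1$, which is the desired conclusion. (The paper does exactly this without normalizing: it multiplies $a+b+c=a$ by $a^{-2}b$, adds $a^{-1}c$, factors the result as $(a^{-1}b+a^{-1}c)(a^{-1}b+1)=a^{-1}b+a^{-1}c$, and cancels the invertible factor $a^{-1}b+a^{-1}c$.) So the ``second relation to be manufactured'' that you were looking for is obtained by multiplying by $x$ alone; the absorption of the $y$-term happens not by a second multiplicative relation but by recognizing the product structure of $x+x^{2}+xy+y$ and then cancelling in the multiplicative group.
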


\begin{proof}
a) Let $a+b+c=a$. Multiply both sides by $a^{-2}b$ and then add $a^{-1}c$. We get $a^{-1}b+a^{-2}b^{2}+a^{-2}bc+a^{-1}c=a^{-1}b+a^{-1}c$, and so 
$(a^{-1}b+a^{-1}c)(a^{-1}b+1)=(a^{-1}b+a^{-1}c)$. Dividing by $a^{-1}b+a^{-1}c$ we get $a^{-1}b+1=1$ as needed.

b) Write $b=a+x$ and $a=a+x+y$. By part a), $a=a+x=b$.
\end{proof}

\begin{definition}\label{Def2}
An additively idempotent parasemifield $S$ is \emph{order-unital} if there exists an element $u\in S$ such that for each $s\in S$ there is 
$n\in\mathbb N$ so that $u^ns+1=1$.
\end{definition}

\smallskip

Note that this definition is equivalent to the corresponding definition of a unital $\ell$-group. For if $v\in S$ is an order-unit in the $\ell$-group sense, we have 
that for each $s\in S$ there is some $n\in\mathbb N$ so that $v^n\geq s$. Choose now $u=v^{-1}$. Then $1\geq u^ns$, and so $1=u^ns+t$ for some $t\in S$. 
Now $1+u^ns=(u^ns+t)+u^ns=u^ns+t=1$. Conversely, if $u$ is an element from the definition \ref{Def2}, then $v=u^{-1}$ will be an order-unit in the $\ell$-group sense.

As usual, $\mathbb N$ and $\mathbb Q^+$ denote the semirings of positive integers and rational numbers, respectively;  $\mathbb N_0$ is the semiring of non-negative integers. 

\

While not as many classes of semirings have been studied as in the case of $\ell$-groups or MV-algebras, 
let us mention at least some examples of parasemifields and simple semirings in order to give our presentation a more concrete flavour. 
Basic example of additively idempotent parasemifields is given by a totally ordered group $G$ (written multiplicatively), where we define the semiring addition $a+b:=\mathrm{max}(a, b)$, obtaining so-called ``tropical semirings" or ``max-plus algebras". Standard examples are 
$\mathbb R(\max, +)$ and $\mathbb Z(\max, +)$. Note that the parasemifields we define in Definition \ref{G(T,v)} are a generalization of the latter case.

Simple semirings were considered in detail by Bashir, Hurt, Jan\v ca\v r\' \i k and Kepka \cite{BHJK}. The study of ideal-simple ones reduces to the case of parasemifields (not necessarily additively idempotent); a basic example of such a construction is the following: 
For a parasemifield $P(+, \cdot, ^{-1}, 1)$ consider the disjoint union $S:=P\cup\{0\}$ and extend the operations by setting $x+0=x$ and $x\cdot 0=0$. Then the semifield $S$ is an ideal-simple semiring.
\label{example}

Congruence-simple semirings are essentially completely classified, with the exception of a rather mysterious class of subsemirings $S$ of positive real numbers $\mathbb R^+(+, \cdot)$. 
We refer the interested reader to \cite{BHJK} for details and only note that the author and Korbel\' a\v r \cite{KaKo1} have provided examples of congruence-simple subsemirings of $\mathbb Q^+$ defined using $p$-adic valuations, such as 
$S=\{x\in\mathbb Q^+\mid 2^{-\mathrm{v}_p(x)}<x\}$ (here $\mathrm{v}_p(x)$ is the additive $p$-adic valuation of $x$).
 
For more background information on semirings see eg. \cite{G}.

\

We will need some basic properties of (finitely generated) parasemifields.

In the rest of this section, let $S$ be a parasemifield $m$-generated as a semiring. That means that there is a surjective semiring homomorphism $\varphi: \mathbb N[x_1, \dots, x_m]$ $\rightarrow S$ (where $x_i$ are indeterminates). For $a=(a_1, \dots, a_m)\in\mathbb N_0^m$ we use the notation 
$x^a=x_1^{a_1}\cdots x_m^{a_m}$.

Let $A$ be the prime subparasemifield of $S$, i.e., the smallest (possibly trivial) parasemifield contained in $S$.

Let $Q$ be the subsemiring of elements which are smaller than some element of $A$, i.e., $Q=\{s\in S|\exists q\in A: s\leq q\}$.  
Let $\mathcal C=\mathcal C(S)=\{a\in\mathbb N_0^m|\varphi(x^a)\in Q\}$ be the corresponding semigroup (or a cone) in $N_0^m$.

The structure of $Q$ and $\mathcal C$ carries a lot of information about $S$. For example, in \cite{JKK} it was used to show that every parasemifield,  two generated as a semiring, is additively idempotent.

\begin{proposition}\label{P1}
a) If $S$ is additively idempotent, then $A=\{1\}$. Otherwise $A\simeq\mathbb Q^+$.

b) For $q_1, q_2\in S$ we have $q_1+q_2\in Q$ if and only if $q_1,q_2\in Q$. For $q\in S$, $n\in\mathbb N$ we have $q^n\in Q$ if and only if $q\in Q$.

c) $\mathcal C$ is a pure subsemigroup of $\mathbb N_0^m$, i.e., it is closed under addition and for $n\in\mathbb N$ and $a\in\mathbb N_0^m$ we have $na\in\mathcal C$ if and only if $a\in\mathcal C$.
\end{proposition}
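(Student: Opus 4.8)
The plan is to prove the three assertions in turn; nearly everything reduces to arithmetic inside the prime subparasemifield, together with one multiplicative cancellation lemma which I expect to be the only real obstacle. Start with \emph{part (a)}. If $S$ is additively idempotent then $1+1=1$, so the one-element set $\{1\}$ is closed under $+,\cdot,{}^{-1}$; and since any subparasemifield of $S$ shares the multiplicative identity of $S$ (its identity $e$ satisfies $e^2=e$, hence $e=1$ by cancellation in the group $S$), it must contain $1$, whence $A=\{1\}$. If $S$ is not additively idempotent, I would introduce $\psi\colon\mathbb Q^+\to S$, $\psi(m/n)=(m\cdot 1)(n\cdot 1)^{-1}$, where $m\cdot 1$ denotes the $m$-fold sum $1+\dots+1$. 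Checking that $\psi$ is a well-defined semiring homomorphism is routine (each $n\cdot 1$ is invertible in $S$), and its image is a subparasemifield contained in every other one, hence equals $A$; so $A\simeq\mathbb Q^+$ once $\psi$ is shown injective. Injectivity amounts to ruling out $m\cdot 1=m'\cdot 1$ for $m<m'$: from $(m\cdot 1)+(m'-m)\cdot 1=m'\cdot 1=m\cdot 1$ one obtains $(m\cdot 1)+1=m\cdot 1$ --- immediately if $m'=m+1$, and otherwise by applying Proposition~\ref{P3}(a) with $b=1$, $c=(m'-m-1)\cdot 1$; iterating the addition of $1$ then gives $m\cdot 1=(m\cdot 1)+(m\cdot 1)=(2m)\cdot 1=(m\cdot 1)(2\cdot 1)$, and cancelling the invertible $m\cdot 1$ yields $2\cdot 1=1$, forcing additive idempotency --- a contradiction.

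The technical heart is a lemma valid in \emph{any} parasemifield $P$: if $w\in P$ and $w^n\ge 1$ for some $n\ge 1$, then $w\ge 1$ (equivalently, $w^n\le 1$ implies $w\le 1$). Here one cannot appeal to a lattice decomposition of $w$ as in an $\ell$-group, so the inequality has to be produced purely multiplicatively. The idea: set $D=1+w+\dots+w^{n-1}$, so that $wD=w+w^2+\dots+w^n$; since $w^n$ equals either $1$ or $1+e$ for some $e\in P$, in both cases $wD$ equals $D$ or $D+e$, hence $wD\ge D$, and multiplying by the invertible $D^{-1}$ gives $w\ge 1$. I expect this telescoping-and-cancellation step to be the main difficulty of the whole proposition.

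For \emph{part (b)}, the first statement is immediate: $q_1,q_2\le q_1+q_2$, so $q_1+q_2\le a\in A$ forces $q_1,q_2\le a$; conversely, if $q_i\le a_i\in A$, then $q_1+q_2\le a_1+a_2\in A$ since $\le$ is compatible with addition and $A$ is closed under it. For the second statement, $q\in Q\Rightarrow q^n\in Q$ is clear, as $q\le a$ implies $q^n\le a^n\in A$. Conversely, suppose $q^n\le a\in A$; replacing $a$ by $a+1\in A$ we may assume $a\ge 1$, so $a^n\ge a\ge q^n$, whence $(aq^{-1})^n\ge 1$; the lemma gives $aq^{-1}\ge 1$, i.e.\ $q\le a\in A$, so $q\in Q$.

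Finally, \emph{part (c)} follows at once. $\mathcal C$ is closed under addition because $Q$ is closed under multiplication: if $\varphi(x^a),\varphi(x^b)\in Q$ then $\varphi(x^{a+b})=\varphi(x^a)\varphi(x^b)\in Q$. Purity is part (b) transported to monomials: $na\in\mathcal C$ means $\varphi(x^a)^n=\varphi(x^{na})\in Q$, so the second statement of (b) gives $\varphi(x^a)\in Q$, i.e.\ $a\in\mathcal C$; the reverse implication $a\in\mathcal C\Rightarrow na\in\mathcal C$ is just the additive closure.
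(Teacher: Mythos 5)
Your proof is correct and, unlike the paper's, essentially self-contained: the paper only sketches this proposition and defers the details to \cite{JKK} and \cite{KKK}. The differences are worth noting. For part (a) the paper argues structurally: $\mathbb Q^+$ is the free $0$-generated parasemifield, so $A$ is a quotient of $\mathbb Q^+$, and congruence-simplicity of $\mathbb Q^+$ forces $A$ to be trivial or all of $\mathbb Q^+$. You instead exhibit the canonical map $\psi$ explicitly and prove injectivity by hand, showing that any collapse $m\cdot 1=m'\cdot 1$ propagates (via Proposition~\ref{P3}(a) and cancellation of the invertible element $m\cdot 1$) to $1+1=1$ and hence to additive idempotency of all of $S$; this is a direct unwinding of what congruence-simplicity buys, and it is valid. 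For part (b) the paper proves only the forward direction of the first claim and says nothing about the statement on powers; you supply the missing half, and your telescoping lemma ($w^n\ge 1$ with $D=1+w+\dots+w^{n-1}$ giving $wD\ge D$, hence $w\ge 1$ after multiplying by $D^{-1}$) is exactly the right tool for the implication $q^n\in Q\Rightarrow q\in Q$, correctly using that $\le$ is preserved by multiplication and that every element of a parasemifield is invertible. The reduction of (c) to (b) matches the paper. The only points you gloss over are minor and harmless: that $Q$ is closed under multiplication (immediate from $s_1s_2\le q_1q_2\in A$) and that $1\in A$ in both cases, which you do implicitly use when replacing $a$ by $a+1$.
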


\begin{proof}
This a summary of various statements in \cite{JKK} and \cite{KKK}. We just sketch the proofs.

a) $\mathbb Q^+$ is the free 0-generated parasemifield. Therefore $A$ is a factor of $\mathbb Q^+$. Now it suffices only to note that $\mathbb Q^+$ is congruence simple.

b) $q_1+q_2\in Q$ means that $q_1+q_2\leq s$ for some $s\in A$. Therefore $q_i\leq q_1+q_2\leq s$ and $q_i\in Q$ ($i=1, 2$).

c) Follows directly from b).
\end{proof}

We shall use the structure of $\mathcal C$ to show Theorem \ref{unital}. In particular, we will need the following proposition which essentially says that there is an element $c$ which is ``inside" the cone $\mathcal C$.

\begin{proposition}\label{P2}
There exists $c\in\mathcal C$ such that:

\nopagebreak

a) $c+e_i\in\mathcal C$ for each $i=1, \dots, m$, where $e_i=(0, \dots, 0, 1, 0, \dots, 0)\in\mathbb N_0^m$ is the vector having 1 at the $i$-th position and 0 elsewhere.

b) $nc+a\in\mathcal C$ for each $a=(a_i)\in\mathbb N_0^m$, where $n=a_1+\dots+a_m$.
\end{proposition}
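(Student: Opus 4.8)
The plan is to pin down $c$ by exhibiting one explicit element of $S$ and pulling it back along $\varphi$. Write $y_i:=\varphi(x_i)\in S$ for the images of the generators and consider
\[
\nu:=\Bigl(1+\sum_{i=1}^{m}y_i\Bigr)^{-1}\in S ,
\]
which is well defined because $S$ is a parasemifield. From $1+\sum_i y_i\ge 1$ and $1+\sum_i y_i\ge y_j$ for every $j$, together with the facts (recorded at the beginning of this section, and used already in the proof of Proposition \ref{P3}) that multiplication preserves $\le$ while passing to inverses reverses it, one obtains
\[
\nu\le 1 \qquad\text{and}\qquad \nu\, y_i\le 1 \quad (i=1,\dots,m).
\]

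Next I would use surjectivity of $\varphi$ to choose a polynomial $N=\sum_{k}c_k x^{n_k}\in\mathbb N[x_1,\dots,x_m]$ with $\varphi(N)=\nu$, and set $c:=n_1$ (any monomial occurring in $N$ will do). Since $\varphi(x^{c})$ is one of the summands of $\varphi(N)=\nu$, we get $\varphi(x^{c})\le\nu$; combining this with the two displayed inequalities yields $\varphi(x^{c})\le\nu\le 1$ and $\varphi(x^{c+e_i})=\varphi(x^{c})\,y_i\le\nu\, y_i\le 1$. As $1$ lies in the prime subparasemifield $A$, all of these elements lie in $Q$, i.e. $c\in\mathcal C$ and $c+e_i\in\mathcal C$ for each $i$; this is part a).

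Part b) is then purely combinatorial: for $a=(a_1,\dots,a_m)\in\mathbb N_0^m$ with $n=a_1+\dots+a_m$ one has the identity $nc+a=\sum_{i=1}^{m}a_i\,(c+e_i)$ in $\mathbb N_0^m$, so $nc+a$ is a sum of elements of $\mathcal C$ and hence lies in $\mathcal C$ because $\mathcal C$ is closed under addition by Proposition \ref{P1}(c); when $n=0$ this sum is empty and $nc+a=0\in\mathcal C$ since $\varphi(x^{0})=1\in A$.

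I do not expect any serious obstacle. The one point that needs a moment's thought is the choice of $\nu$: the summand $1$ is essential (taking merely $\sum_i y_i$ need not give something $\le 1$), and $\nu$ must be pulled back to an honest polynomial so that its individual monomials are bounded by $\nu$ termwise; once this is in place, everything reduces to the order manipulations in Section \ref{sec:3} and, for b), to a one-line identity in $\mathbb N_0^m$.
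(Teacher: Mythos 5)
Your proposal is correct and follows essentially the same route as the paper: both invert $1+x_1+\dots+x_m$, pull the inverse back to a polynomial, and take the exponent of any monomial occurring there as $c$; you phrase the key step via the order inequalities $\varphi(x^c)\le\nu\le 1$ and $\nu y_i\le 1$, whereas the paper expands the product $fg$ and applies Proposition \ref{P1}(b) to its summands, but these are the same argument. Part b) is identical in both.
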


\begin{proof}
a) Take $f=1+x_1+\dots+x_m\in\mathbb N[x_1, \dots, x_m]$. Since $S$ is a parasemifield, there is $g=\sum_j a_j x^{c^{(j)}}$ (where $a_j\in\mathbb N$) such that $\varphi(g)$ is the inverse of $\varphi(f)$ in $S$, i.e., $\varphi(fg)=1$. Thus $\varphi(fg)\in Q$, and since 
$fg= \sum_j a_j (x^{c^{(j)}}+x^{c^{(j)}+e_1}+\dots+x^{c^{(j)}+e_m})$, by \ref{P1} b), each of the monomials $x^{c^{(j)}}, x^{c^{(j)}+e_1}, \dots, x^{c^{(j)}+e_m}$ lies in $Q$, and so $c^{(j)}, c^{(j)}+e_1, \dots, c^{(j)}+e_m$ all lie in $\mathcal C$. Hence we can just choose $c=c^{(j)}$ for any $j$.

b) $a=a_1e_1+\dots+a_me_m$, and so $nc+a=a_1(c+e_1)+\dots+a_m(c+e_m)\in \mathcal C$.
\end{proof}

We are now ready to prove the main result of this section:

\begin{theorem}\label{unital}
Let $S$ be an additively idempotent parasemifield, finitely generated as a semiring. Then $S$ is order-unital.
\end{theorem}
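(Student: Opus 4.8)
The goal is to produce an element $u \in S$ witnessing order-unitality, i.e.\ such that for every $s \in S$ there is $n \in \mathbb{N}$ with $u^n s + 1 = 1$. My plan is to build $u$ directly from the element $c \in \mathcal{C}$ supplied by Proposition \ref{P2}, namely to set $u = \varphi(x^c)^{-1}$, and then to verify the defining property by reducing an arbitrary $s \in S$ to a monomial expression in the generators and applying part (b) of Proposition \ref{P2}.

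The first step is to record the easy reduction that it suffices to check the order-unit condition on a generating set, or more precisely on the images $\varphi(x^a)$ of monomials. Indeed, since $\varphi: \mathbb{N}[x_1,\dots,x_m] \to S$ is surjective, every $s \in S$ is a finite sum $s = \sum_j \varphi(x^{a^{(j)}})$ of monomial images; and if $u^{n_j}\varphi(x^{a^{(j)}}) + 1 = 1$ for each $j$, then taking $n = \max_j n_j$ and using additive idempotence ($1+1=1$, and more generally $u^{n}\varphi(x^{a^{(j)}}) \le u^{n_j}\varphi(x^{a^{(j)}})$ fails in general, so instead one sums the $m$-many relations) gives $u^n s + 1 = 1$. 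Here I must be slightly careful: from $u^{n_j} t_j + 1 = 1$ one gets $u^{n} t_j + 1 = 1$ for all $n \ge n_j$ because multiplying the relation by $u^{n - n_j}$ and adding $1$ yields $u^n t_j + u^{n-n_j} + 1 = u^{n-n_j} + 1$, and then adding back an appropriate term; it is cleaner to simply note $1 = u^{n_j}t_j + 1 \ge u^{n_j} t_j$, so $u^{n_j}t_j \in Q$, hence by Proposition \ref{P1}(b) the sum lies in $Q$ too, and $q \le 1$ for $q \in Q$ (as $A = \{1\}$ by Proposition \ref{P1}(a)) gives $q + 1 = 1$.

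The heart of the argument is then: for a monomial $x^a$ with $a = (a_1,\dots,a_m)$ and $n = a_1 + \dots + a_m$, Proposition \ref{P2}(b) gives $nc + a \in \mathcal{C}$, which by definition of $\mathcal{C}$ means $\varphi(x^{nc+a}) \in Q$, i.e.\ $u^{-n}\,\varphi(x^a) \in Q$ where $u = \varphi(x^c)^{-1}$; wait — I need $u^n \varphi(x^a) \in Q$, so I should instead set $u = \varphi(x^c)$ is wrong too, since $\varphi(x^c) \in Q$ means $\varphi(x^c) \le 1$ and powers stay $\le 1$, giving no control over large $s$. The correct choice is $u = \varphi(x^c)^{-1}$: then $u^n \varphi(x^a) = \varphi(x^a)\varphi(x^c)^{-n} = \varphi(x^{a})/\varphi(x^{nc})$, and I want this to be $\le 1$. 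Since $\varphi(x^{nc+a}) \le 1$ is the wrong inequality, let me reconsider: $\varphi(x^{nc+a}) \in Q$ says $\varphi(x^{nc})\varphi(x^a) \le 1$, i.e.\ $\varphi(x^a) \le \varphi(x^c)^{-n} = u^n$, equivalently $u^{-n}\varphi(x^a) \le 1$. So the right definition is $u = \varphi(x^c)^{-1}$ and the claim $u^{-n} \cdot$ is still backwards. Resolving this sign bookkeeping carefully is exactly the one delicate point: with $v = \varphi(x^c)$ we have shown $\varphi(x^a) \le v^{-n}$, hence $v^{n}\varphi(x^a) \le 1$, so $u := v$ itself works, because $v^n \varphi(x^a) \in Q$ and then $v^n\varphi(x^a) + 1 = 1$ as $Q$ sits below $1$. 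Thus $u = \varphi(x^c)$ is the order-unit, and combined with the first-step reduction over sums of monomials, every $s \in S$ satisfies $u^n s + 1 = 1$ for suitable $n$, completing the proof. The main obstacle is not conceptual but this careful tracking of inequalities and the passage from monomials to arbitrary elements; once Proposition \ref{P2}(b) is in hand, the rest is bookkeeping with the order $\le$ and Proposition \ref{P1}(b).
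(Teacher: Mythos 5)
Your proof is correct and follows essentially the same route as the paper's: take $u=\varphi(x^c)$ with $c$ as in Proposition \ref{P2}, reduce to monomials $s=\varphi(x^a)$, and use Proposition \ref{P2}(b) together with $A=\{1\}$ and the antisymmetry of $\le$ (Proposition \ref{P3}) to conclude $u^ns+1=1$. Your sign-chasing detour resolves to the same choice $u=\varphi(x^c)$ that the paper makes, and the passage from individual exponents $n_j$ to a common $n=\max_j n_j$ is legitimate because $u\in Q$ forces $u^nt_j=u^{n-n_j}\cdot u^{n_j}t_j\in Q$ for all $n\ge n_j$.
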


\begin{proof}
Choose $u=\varphi(x^c)$ with $c\in\mathcal C$ chosen by Proposition \ref{P2}. We want to show that $u^ns+1=1$ for each $s\in S$ and some $n\in\mathbb N$. Clearly it suffices to show it for $s=\varphi(x^a), a\in\mathbb N_0^m$ (each element of $S$ is a finite sum of elements of this form).

By \ref{P2} b), we can choose $n$ large enough so that $nc+a\in\mathcal C$. Thus $u^ns\in Q$.
Since $S$ is additively idempotent, $A=\{1\}$, and so this means that $u^ns\leq 1$.
Therefore $1\leq u^ns+1\leq 1$, and so by Proposition \ref{P3}, $u^ns+1=1$ and $S$ is unital.
\end{proof}

Note that the order-unit we have just constructed is in no way unique. However, we shall see that the resulting classification \ref{classify} is independent of the choice of the order-unit.

\

\section{The classification}\label{sec:4}

By Theorem \ref{unital} we know that every additively idempotent parasemifield, finitely generated as a semiring, is order-unital. 
Considering it as an $\ell$-group via Proposition \ref{3.1}, we see that it is one of the $\ell$-groups classified in \cite{BCM}. In this section we use these results to classify all such parasemifields, namely, we show the following Theorem \ref{classify}. 
Its proof is longish and will end only on page~\pageref{endproof}.

\begin{thm}\label{classify}
Let $S$ be an additively idempotent parasemifield, finitely generated as a semiring. Then $S$ is a (finite) product of 
parasemifields of the form
${G(T_i,v_i)}$, where $(T_i, v_i)$ are rooted trees and $G(T_i,v_i)$ are associated additively idempotent parasemifields (or equivalently $\ell$-groups), defined in Definition \ref{G(T,v)}.

Two such products $\prod_{i=1}^k G(T_i,v_i)$ and $\prod_{j=1}^{k^\prime} G(T_j^\prime,v_j^\prime)$ are isomorphic parasemifields if and only if
$k=k^\prime$ and there is some permutation $\sigma$ of $\{1, \dots, k\}$ such that for all $i$ we have $(T_i,v_i)\simeq(T_{\sigma(i)}^\prime,v_{\sigma(i)}^\prime)$ as rooted trees.
\end{thm}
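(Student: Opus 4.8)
The plan is to run the classification machinery of \cite{BCM} through the parasemifield lens, and then cut down the allowed stellar sequences using the constraint that $S$ is finitely generated \emph{as a semiring} (not merely as an $\ell$-group). First I would invoke Theorem \ref{unital} to get an order-unit, so that $(S,u)\simeq\mathcal M([0,1]^n)/I$ for an ideal $I$ coming from a stellar sequence $\mathcal W$, realized geometrically as a descending chain $[0,1]^n\supset|\Delta_0|\supset|\Delta_1|\supset\cdots$. The key extra input is that finite semiring generation forces the polyhedra $|\Delta_i|$ to stabilize at a polyhedron $P=\bigcap_i|\Delta_i|$ whose structure is severely restricted: I would show $\mathcal M(P)$ is finitely generated as a semiring iff $P$ is (the support of) a rational complex each of whose connected components is, combinatorially, a tree of $1$-simplices — i.e.\ has no higher-dimensional faces and no cycles — with a compatible system of denominators at the vertices. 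The ``no $2$-simplex'' part is Corollary \ref{simplex} (a full simplex of dimension $\ge2$, hence $\mathcal M([0,1]^n)$ itself, is not finitely generated as a semiring); the ``no cycle'' part needs a separate argument showing a circuit would again produce a non-finitely-generated semiring of piecewise-linear functions.

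Next I would set up the correspondence with rooted trees. To each connected tree-like rational polyhedral component, with its vertex denominators recording how the Farey blow-ups subdivided edges, I attach a rooted tree $(T,v)$, and conversely Definition \ref{G(T,v)} produces $G(T,v)\simeq\mathcal M(P_{\text{component}})$. Since the ideal $I$ is determined by $P=\bigsqcup_i P_i$ and $\mathcal M(\bigsqcup_i P_i)\simeq\prod_i\mathcal M(P_i)$ (functions on a disjoint union split as a product), this yields $S\simeq\prod_{i=1}^k G(T_i,v_i)$. I would also have to check that the decomposition into components is forced — i.e.\ that $S$ is directly indecomposable as a parasemifield exactly when $P$ is connected — which follows from analyzing idempotents/central elements, or more concretely from the fact that a nonconstant $\{0,c\}$-valued element of $\mathcal M(P)$ corresponds to a clopen partition of $P$ hence to a disconnection.

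For the uniqueness statement I would argue that the data $(k;\,(T_i,v_i))$ is reconstructible from $S$ up to permutation and rooted-tree isomorphism. The number $k$ of factors is the number of direct parasemifield factors (equivalently minimal nonzero $\ell$-ideals, or connected components of the associated polyhedron), which is an isomorphism invariant. For each factor, the rooted tree $(T_i,v_i)$ must be recovered intrinsically from $G(T_i,v_i)$: I would identify the maximal $\ell$-ideals (leaves), the lattice of $\ell$-ideals (the tree order, after fixing the root as the distinguished point where the order-unit ``lives''), and the weights $v_i$ from the denominators appearing in the corresponding quotient maps to $\mathbb Z$; two products are then isomorphic iff these invariants match, giving the permutation $\sigma$. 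The main obstacle I expect is the geometric classification step: proving rigorously that finite semiring generation of $\mathcal M(P)/\!\sim$ along a stellar sequence is equivalent to the limiting polyhedron being a weighted forest of segments — in particular ruling out stable higher-dimensional pieces and stable cycles, and controlling the \emph{infinitely many} remaining Farey blow-ups on the surviving edges so that they still produce a finitely generated semiring. This is exactly the ``careful discussion of the geometry of stellar sequences'' the introduction warns about, and it is where Definition \ref{G(T,v)} and the bulk of the work in Section \ref{sec:4} will be needed.
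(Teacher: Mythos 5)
There is a genuine gap, and it sits exactly at the step you yourself flag as the main obstacle: your geometric picture of the limit set is wrong. Corollary \ref{simplex} rules out finite semiring generation of $\mathcal M(A)$ for \emph{every} simplex of dimension $\geq 1$, not just dimension $\geq 2$: the convexity argument of Lemmas \ref{convex} and \ref{sequence} already applies to a segment, since a segment contains a convergent sequence of points on which finitely many piecewise-linear generators are simultaneously linear, hence generate only convex functions. Combined with the surjection $S=\mathcal M([0,1]^n)/I\twoheadrightarrow\mathcal M\bigl(\bigcap_i|\Delta_i|\bigr)$ of Lemma \ref{surject}, this forces the limit set $D=\bigcap_i|\Delta_i|$ to be a \emph{finite set of points} -- a ``weighted forest of segments'' can never occur, so there is no circuit to rule out and no system of denominators to track. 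Your proposed equivalence ``$\mathcal M(P)$ finitely generated iff $P$ is a tree of $1$-simplices'' is false in both directions of your intended use.

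The second, related, error is the identification $S\simeq\mathcal M(P)$ with $P=\bigcap_i|\Delta_i|$. The ideal $I$ consists of functions vanishing on \emph{some} $|\Delta_i|$, so the quotient is a germ-type object that remembers the directions along which the complexes shrink, and Lemma \ref{surject} gives only a surjection onto $\mathcal M(D)$, not an isomorphism (if it were an isomorphism, each factor would be $\mathcal M(\{d\})=\mathbb Z$ and no trees would appear at all). In the paper the product decomposition (Proposition \ref{product}) is indexed by the finitely many points of $D$, one germ $\mathcal M_{\mathcal D^i}(d_i)$ per point, and the rooted tree $(T_i,v_i)$ is extracted not from the shape of a one-dimensional limit polyhedron but from the closure-containment order on the family $\mathcal P$ of stable open simplexes (of all dimensions) surviving near $d_i$, with root the point $d_i$ itself; this is the content of Lemma \ref{mathcal P} and Steps 4--5 of the proof. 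Note also that in Definition \ref{G(T,v)} the symbol $v_i$ is the root of $T_i$, not a weight function, and $G(T,v)$ carries no denominator data -- each vertex contributes a bare copy of $\mathbb Z$ ordered lexicographically along the tree. Your uniqueness strategy (recovering the tree intrinsically from the $\ell$-ideal structure) is close in spirit to the paper's reconstruction via infinitesimal elements, but it cannot be carried out until the construction of $G(T,v)$ from $S$ is set up correctly as a germ at a point rather than as functions on a one-dimensional complex.
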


The theorem can be viewed as happening in the category of additively idempotent parasemifields (in particular, it gives an equivalence between the subcategory of finitely generated objects and the subcategory consisting of finite products $\prod_{i=1}^k G(T_i,v_i)$).
Equivalently by Proposition \ref{3.1}, we can also view it in the isomorphic category of $\ell$-groups: 
It feels slightly more natural to define $G(T, v)$ there, in the language of $\ell$-groups.
First let us briefly introduce some notions related to rooted trees.

Note that a {\it rooted tree} $(T,v)$ is a (finite, non-oriented) connected graph $T$ containing no cycles and having a specified vertex, the {\it root} $v$.
By an {\it initial segment} $T^\prime$ of a rooted tree $(T, v)$ we shall mean a (possibly empty) subtree such that if $w\in T^\prime$, then all the vertices on the (unique) path in $T$ from $v$ to $w$ lie in $T^\prime$. If $T^\prime$ is a non-empty initial segment of a rooted tree $(T, v)$, the set of {\it next vertices} $N(T^\prime)$ is the set of all vertices $w\in T\backslash T^\prime$ such that there is $t\in T^\prime$ and an edge $(w, t)$ in $T$. If $T^\prime$ is empty, we set 
$N(T^\prime)=\{v\}$.
For a vertex $w$ define a tree $T_w\subset T$ consisting of exactly all the vertices $u\in T$ such that the (unique) path from $u$ to the root $v$ passes through $w$.

We are now ready to define $G(T, v)$.

\begin{definition}\label{G(T,v)}
Let $T$ be a tree with root $v$. Define an $\ell$-group $G(T, v)$ as follows: First attach a copy of the group of integers $\mathbb Z=\mathbb Z_w$ to each vertex  $w$ of $T$. 
Then $G(T, v)$ as an additive group is just the direct product of these groups $\mathbb Z_w$. We shall denote elements of $G(T, v)$ as tuples $(g_w)$ with $g_w\in\mathbb Z_w$.

Now take tuples $(g_w)$ and $(h_w)$ and define $(g_w)\vee (h_w)=(k_w)$ and $(g_w)\wedge (h_w)=(m_w)$ as follows: Let $T^\prime$ be the largest initial segment of $T$ such that $g_w=h_w$ for all $w\in T^\prime$. For $w\in T^\prime$, set $k_w=m_w=g_w(=h_w)$.
Take now $w\in N(T^\prime)$. Then $g_w\neq h_w$, without loss of generality assume that $g_w>h_w$. Then define $k_u=g_u$ and $m_u=h_u$ for all $u\in T_w$.

It is straightforward to check that $G(T, v)$ is indeed an abelian lattice ordered group; note that the lattice operations come essentially from some lexicographical ordering on $G(T, v)$ with respect to the structure of the tree.
\end{definition}

\smallskip

Let us note that the construction of $G(T, v)$ is closely related to the Hahn embedding: the tree $T$ is a chain if and only if the $\ell$-group $G$ is linearly ordered. In this case, the group $G(T, v)$ is exactly the group $\mathbb Z^n$ equipped with the lexicographic ordering, where $n$ is the number of vertices of $T$.

\ 

We will need a few properties of the construction of \cite{BCM} and of piecewise linear convex functions, especially in relation to being finitely generated as a semiring. They are collected in the following three lemmas.

\begin{lemma}\label{surject}
Let $\mathcal W=(W_0, W_1, \dots)$ be the stellar sequence corresponding to the $\ell$-group $G=\mathcal M([0,1]^n)/I$. Let $[0,1]^n\supset D_0\supset D_1\supset D_2\supset\dots$ be the corresponding sequence of complexes, let $D=\bigcap D_i$ and consider the $\ell$-group $\mathcal M(D)$ of restrictions of functions in $\mathcal M([0,1]^n)$ to $D$.

Then there is a surjection $G=\mathcal M([0,1]^n)/I\rightarrow\mathcal M(D)$. 
\end{lemma}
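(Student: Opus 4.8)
The plan is to exhibit the surjection as the natural restriction map $\rho: \mathcal M([0,1]^n)/I \to \mathcal M(D)$, $f + I \mapsto f|D$, and to check that it is well-defined and surjective; surjectivity is essentially immediate from the definition of $\mathcal M(D)$ as restrictions of functions in $\mathcal M([0,1]^n)$, so the real content is well-definedness, i.e.\ that $I$ is contained in the kernel of the restriction-to-$D$ map. Concretely, I would first recall that $I = I(\mathcal P)$ where $\mathcal P$ is the sequence $D_0 \supset D_1 \supset \dots$ (with possibly $D_{-1} = [0,1]^n$), so $f \in I$ means $f(D_i) = 0$ for some $i$. Since $D = \bigcap_j D_j \subset D_i$, we get $f(D) = 0$, hence $f|D = 0$ in $\mathcal M(D)$. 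Thus $\rho$ factors through $\mathcal M([0,1]^n)/I$.

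Next I would verify that $\rho$ is a homomorphism of $\ell$-groups: pointwise addition, $\vee$ and $\wedge$ are all computed pointwise on $[0,1]^n$, and restriction to a subset $D$ manifestly commutes with all three operations, so $\rho$ is an $\ell$-homomorphism on $\mathcal M([0,1]^n)$; by the previous paragraph it descends to $\mathcal M([0,1]^n)/I$. It also maps the order-unit (the class of the constant function $1$) to the constant function $1$ on $D$, which is an order-unit of $\mathcal M(D)$, so it is even a unital $\ell$-homomorphism. Surjectivity then holds because every element of $\mathcal M(D)$ is by definition of the form $g|D$ for some $g \in \mathcal M([0,1]^n)$, and $g|D = \rho(g + I)$.

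The one point requiring a little care — and the step I expect to be the main (minor) obstacle — is making sure the indexing of $\mathcal P$ relative to the complexes $D_i$ is handled correctly, i.e.\ that $\{f : f(D_i) = 0 \text{ for some } i\}$ is genuinely the ideal $I$ attached to $G$ by the classification (the sequence in the theorem statement starts $[0,1]^n \supset |\Delta_0| \supset \dots$, so one should take $\mathcal P = (D_0, D_1, \dots)$ with $D_i = |\Delta_i|$, and note that $D = \bigcap_i D_i$ is unaffected by whether or not the ambient cube is included). Once this bookkeeping is settled, the argument is entirely formal: $D \subset D_i$ for every $i$ forces $f(D) = 0$ whenever $f(D_i) = 0$, giving $I \subseteq \ker\rho$, and the rest is the standard factorization of a surjective $\ell$-homomorphism through its kernel.
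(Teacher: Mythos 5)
Your proposal is correct and follows essentially the same route as the paper: both exhibit the map as the restriction homomorphism factoring through the quotient, with well-definedness coming from $D\subseteq D_i$ (so $f(D_i)=0$ forces $f|D=0$) and surjectivity immediate from the definition of $\mathcal M(D)$. Your additional checks (that restriction is an $\ell$-homomorphism and preserves the order-unit) are fine elaborations of what the paper leaves implicit.
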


\begin{proof}
Let $\mathrm{res}: \mathcal M([0,1]^n)\rightarrow\mathcal M(D)$  be the restriction map and let $\pi:\mathcal M([0,1]^n)$ $\rightarrow\mathcal M([0,1]^n)/I$ be the projection. By the definition of $I$, if $\pi(f)=\pi(g)$ then $\mathrm{res}(f)=\mathrm{res}(g)$, and so $\mathrm{res}$ factors through $\pi$, i.e., 
$\mathrm{res}:\mathcal M([0,1]^n)$ $\rightarrow\mathcal M([0,1]^n)/I\rightarrow\mathcal M(D)$. Let $r:\mathcal M([0,1]^n)/I\rightarrow\mathcal M(D)$ be the corresponding map. Since $\mathrm{res}$ is a surjective homomorphism by definition, $r$ is surjective as well.
\end{proof}

\begin{lemma}\label{convex}
Let $A\subset [0,1]^n$ be a simplex and $f, g\in\mathcal W(A)$ convex functions. Then $\max(f, g)$ and $f+g$ are also convex.
\end{lemma}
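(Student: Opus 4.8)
The plan is to derive both claims from the two classical facts of convex analysis that the pointwise maximum and the pointwise sum of convex functions are again convex; the simplex $A$ enters only through the fact that it is a convex subset of $[0,1]^n$. First I would fix notation: a function $h\colon A\to\mathbb R$ is convex precisely when $h(\lambda x+(1-\lambda)y)\le\lambda h(x)+(1-\lambda)h(y)$ for all $x,y\in A$ and all $\lambda\in[0,1]$, which is meaningful because $A$, being a simplex, is convex and hence contains the whole segment $[x,y]$.

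For $f+g$ I would add the defining inequalities of $f$ and of $g$, evaluated at the same pair $x,y\in A$ and the same $\lambda\in[0,1]$; the left-hand sides combine to $(f+g)(\lambda x+(1-\lambda)y)$ and the right-hand sides to $\lambda(f+g)(x)+(1-\lambda)(f+g)(y)$, which is exactly convexity of $f+g$.

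For $\max(f,g)$ I would set $h=\max(f,g)$ and use $f\le h$ and $g\le h$ pointwise. At the point $z=\lambda x+(1-\lambda)y$ the value $h(z)$ equals either $f(z)$ or $g(z)$; in the first case $h(z)=f(z)\le\lambda f(x)+(1-\lambda)f(y)\le\lambda h(x)+(1-\lambda)h(y)$ by convexity of $f$ together with $f(x)\le h(x)$ and $f(y)\le h(y)$, and symmetrically in the second case. Hence $h$ is convex. (By an obvious induction the same arguments show that any finite sum and any finite maximum of convex functions in $\mathcal W(A)$ is convex, which is the form in which this is applied later.) I would then close by noting that $f+g$ and $\max(f,g)$ still lie in $\mathcal W(A)$, since this set is closed under addition and under the lattice join $\vee=\max$, so the two newly produced functions are genuinely convex elements of $\mathcal W(A)$.

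There is no real obstacle here; the argument is a routine verification. The only subtlety worth flagging explicitly is that ``convex'' refers throughout to the ordinary affine notion on the convex set $A$, and not to anything about the lattice order on $\mathcal M$, and once that is made precise the two displayed inequalities are immediate.
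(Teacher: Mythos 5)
Your proof is correct. It differs from the paper's in mechanism, though not in substance: you verify convexity directly from the defining inequality $h(\lambda x+(1-\lambda)y)\le\lambda h(x)+(1-\lambda)h(y)$, whereas the paper works with epigraphs, observing that $h$ is convex iff the set $G(h)$ of points above its graph is convex, so that $\max(f,g)$ is handled in one line via $G(\max(f,g))=G(f)\cap G(g)$ (an intersection of convex sets), and $f+g$ by a slightly more hands-on argument tracking points on segments in $G(f)$ and $G(g)$. Both routes are standard; yours is arguably the cleaner of the two for the sum, where the epigraph bookkeeping in the paper is a bit fiddly, while the paper's is slicker for the maximum. Your closing remarks --- that finite sums and maxima are what is actually used later, and that $A$ enters only through being convex --- are accurate and match how the lemma is applied in the proof of Lemma \ref{sequence}. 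One tiny caveat: the statement is phrased for $f,g\in\mathcal W(A)$ (the paper's notation for what is elsewhere written $\mathcal M(A)$), but as you note, nothing beyond convexity of $A$ and of $f,g$ is needed, so this does not affect the argument.
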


\begin{proof}
A function $h$ is convex if the set $G(h)$ of all points above its graph is convex (in $A\times \mathbb R$), i.e., if the line segment between any two points in $G(h)$ lies in $G(h)$. Let $X, Y\in G(\max(f, g))$ and denote $XY$ the line segment between these points. Since $f$ and $g$ are both convex, $XY\in G(f)$ and $XY\in G(g)$. But then $XY\in G(f)\cap G(g)=G(\max(f, g))$. 

For $f+g$, choose $X=(x_1, x_2), Y=(y_1, y_2)\in G(f+g)$ ($x_1, y_1$ are $n$-tuples in $A$ and $x_2, y_2\in \mathbb R$). Then there are 
$X^\prime=(x_1,x^\prime), Y^\prime=(y_1,y^\prime)\in G(f)$ and $X^{\prime\prime}=(x_1,x^{\prime\prime}), Y^{\prime\prime}=(y_1,y^{\prime\prime})\in G(g)$ such that $x_2=x^\prime+x^{\prime\prime}$ and $y_2=y^\prime+y^{\prime\prime}$. If we now take points $X_0=(a, b)\in G(f)$ and $Y_0=(a,c)\in G(g)$ on the line segments 
$X^\prime Y^\prime$ and $X^{\prime\prime} Y^{\prime\prime}$, respectively, then the point $(a, b+c)$ is a point on the line segment $XY$ and lies in $G(f+g)$ (and each point of the line segment $XY$ is of this form).
\end{proof}

\begin{lemma}\label{sequence}
Let $a_1, a_2, \dots $ be a sequence of points in $D$ such that $\lim a_i=a\in [0,1]^n$. Then $\mathcal M(\{a_1, a_2, \dots\})$ and $\mathcal M(D)$ are not finitely generated semirings.
\end{lemma}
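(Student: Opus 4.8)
\textbf{Proof plan for Lemma \ref{sequence}.}

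The plan is to show that an infinite, convergent sequence of points forces a failure of finite generation for the semiring structure on the relevant $\ell$-group of piecewise-linear functions. First I would reduce to a single clean statement: it suffices to produce, in each of $\mathcal M(\{a_1,a_2,\dots\})$ and $\mathcal M(D)$, an infinite strictly descending chain of ``monomial-type'' behaviours that cannot be matched by any finite set of generators under the semiring operations $\vee$ and $+$. Since $D\supseteq\{a_1,a_2,\dots\}$, by Lemma \ref{surject} (applied to the sequence whose intersection is $\{a_1,a_2,\dots\}$, or directly via restriction) there is a surjective semiring homomorphism $\mathcal M(D)\to\mathcal M(\{a_1,a_2,\dots\})$; since a quotient of a finitely generated semiring is finitely generated, it is enough to prove the claim for $\mathcal M(\{a_1,a_2,\dots\})$. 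So the core of the argument concerns functions on a countable convergent set of points.

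Next I would set up the obstruction. Suppose for contradiction that $\mathcal M(\{a_1,a_2,\dots\})$ is generated as a semiring by finitely many functions $f_1,\dots,f_r$ (together with the constants coming from $\mathbb N$). Every element is then obtained from the $f_j$ by finitely many applications of $+$ and $\vee$; writing such an expression out, each generated function is of the form $\bigvee_{s} \big(\sum_j c_{s,j} f_j + n_s\big)$ with $c_{s,j}\in\mathbb N_0$, $n_s\in\mathbb N_0$ --- i.e. a finite max of $\mathbb N_0$-linear combinations of the $f_j$. The key point is a \emph{local convexity/linearity} phenomenon near the limit point $a$: on a neighbourhood of $a$ in the index set, each generator $f_j$ is the restriction of a fixed linear function with integer coefficients (each $f_j$ lifts to a piecewise-linear function on $[0,1]^n$ with finitely many pieces, so one of those linear pieces agrees with $f_j$ on all but finitely many of the $a_i$). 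Hence on a cofinite subset of $\{a_i\}$, every semiring-generated function equals a finite max of integral-affine functions, evaluated at the $a_i\to a$. I would then exhibit a single function $h\in\mathcal M(\{a_1,a_2,\dots\})$ (e.g. one whose values at $a_i$ oscillate or decay in a way that is not eventually a finite max of affine functions along the sequence --- concretely, choose $h(a_i)$ so that the ``slopes'' $h(a_{i+1})-h(a_i)$, suitably normalized relative to the directions $a_{i+1}-a$, take infinitely many distinct values, which a finite max of fixed integral-affine functions cannot do) and derive a contradiction. The analogue for $\mathcal M(D)$ then follows from the reduction above, or symmetrically by noting $D$ itself contains such a convergent sequence by hypothesis.

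I expect the main obstacle to be making the ``cannot be a finite max of integral-affine functions along the sequence'' step fully rigorous: one must control how the combinatorics of taking maxima interacts with the geometry of the points $a_i$ accumulating at $a$. The clean way to handle it is a pigeonhole argument --- a finite max of $N$ affine functions partitions a neighbourhood of $a$ into at most $N$ regions on each of which the max is a single affine function, so along the tail of $(a_i)$ one region is hit infinitely often, and on that infinite subsequence the generated function is \emph{exactly} one integral-affine function; choosing $h$ to violate this on every infinite subsequence (possible because there are only countably many integral-affine functions but we have full freedom in prescribing $h$ on the discrete set $\{a_i\}$, subject only to $h$ extending to some piecewise-linear integral function on $[0,1]^n$ --- which is automatic since the $a_i$ are isolated in $\{a_1,a_2,\dots\}$) gives the contradiction. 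The subtlety to get right is precisely this last parenthetical: ensuring the chosen $h$ genuinely lies in $\mathcal M(\{a_1,a_2,\dots\})$, i.e. is the restriction of some $f\in\mathcal M([0,1]^n)$; since $\{a_1,a_2,\dots\}$ omits its limit point $a$, each $a_i$ has a neighbourhood in the set containing no other $a_j$, and one can interpolate an integral piecewise-linear function on $[0,1]^n$ realizing any prescribed integer values at the $a_i$, so this is fine.
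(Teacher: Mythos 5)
Your reduction to $\mathcal M(\{a_1,a_2,\dots\})$ via the restriction surjection, and your normal form for semiring-generated elements as finite maxima of $\mathbb N_0$-linear combinations of the generators, are both fine and agree with the paper's setup. (A minor slip: a generator need not be linear on a \emph{cofinite} subset of the $a_i$ --- several of its pieces may meet at $a$ and the $a_i$ may interleave among them --- but an iterated pigeonhole gives an infinite subset on which all generators are simultaneously linear, and restricting to that subset is again surjective, so this is repairable.) The fatal problem is the function $h$ on which your whole contradiction rests: it does not exist in $\mathcal M(\{a_1,a_2,\dots\})$. Your parenthetical claim that one can interpolate an element of $\mathcal M([0,1]^n)$ through arbitrary prescribed integer values at the $a_i$ is false, precisely because the $a_i$ accumulate at $a$ and every $f\in\mathcal M([0,1]^n)$ has only \emph{finitely many} affine pieces: by pigeonhole one piece contains infinitely many $a_i$, so \emph{every} element of $\mathcal M(\{a_1,a_2,\dots\})$ already agrees with a single integral-affine function along some infinite subsequence. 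Thus the invariant you propose (``not eventually a single affine function on any infinite subsequence'') is satisfied by no element of the semiring at all, and in the other direction it fails to separate semiring-generated elements from arbitrary ones: $L_1\wedge L_2$ is also a piecewise-affine function with finitely many pieces and passes your test, yet it is exactly the kind of element a finitely generated sub-semiring must miss. Any invariant that sees only ``finite max of affine functions region by region'' applies equally to all of $\mathcal M$ and cannot detect the difference between the $(\vee,+)$-closure and the full $\ell$-group.

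The paper's proof uses an obstruction that genuinely distinguishes the semiring operations from the lattice-group operations: \emph{convexity}. After passing to a simplex $A$ on which all generators $f_1,\dots,f_k$ are linear and which contains infinitely many $a_i$ (the same pigeonhole you use), Lemma \ref{convex} shows that $\vee$ and $+$ preserve convexity, so every element of the sub-semiring generated by $f_1,\dots,f_k$ is the restriction of a convex function on $A$; but $\mathcal M(B)$ contains restrictions of non-convex functions such as $L_1\wedge L_2$ (the operation $\wedge$, equivalently multiplicative inversion in the parasemifield, destroys convexity). If you want to salvage your argument, you must replace your diagonalization by a property of this kind --- one preserved by $\vee$ and $+$ but violated by some genuine element of $\mathcal M(\{a_1,a_2,\dots\})$.
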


\begin{proof}
Assume that there are functions $f_1, \dots, f_k\in\mathcal M([0,1]^n)$ whose restrictions generate $\mathcal M(\{a_1, a_2, \dots\})$ as a semiring. 
Since each $f_i$ is piecewise linear, we can find a simplex $A$ such that each $f_i$ is linear on $A$ and infinitely many of the $a_i$ lie in $A$. Denote $B$ the set of all such $a_i$. Using the (surjective) restriction map $\mathcal M(\{a_1, a_2, \dots\})\rightarrow\mathcal M(B)$, we see that the functions $f_1, \dots, f_k$ generate  $\mathcal M(B)$ as well.

Now consider the subset $M$ of $\mathcal M(A)$ semiring-generated by $f_1, \dots, f_k$. Since each linear function is convex, each function in $M$ is convex by Lemma \ref{convex}. But there are clearly functions in $\mathcal M(B)$ which are not restrictions of convex functions on $A$, a contradiction.

The restriction map is a surjection from $\mathcal M(D)$ onto $\mathcal M(\{a_1, a_2, \dots\})$. Thus neither $\mathcal M(D)$ is finitely generated.
\end{proof}

Note that the same proof shows the following corollary:

\begin{corollary}\label{simplex}
Let $A\subset [0,1]^n$ be a simplex of dimension $\geq 1$. Then $\mathcal W(A)$ is not a finitely generated semiring.
\end{corollary}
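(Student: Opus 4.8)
The plan is to reproduce the proof of Lemma~\ref{sequence} almost verbatim, with the accumulating sequence $\{a_1, a_2, \dots\}$ there replaced by the simplex $A$ itself. Suppose, for contradiction, that $\mathcal M(A)$ --- the semiring $\mathcal W(A)$ of the statement, consisting of the restrictions to $A$ of functions in $\mathcal M([0,1]^n)$ --- is generated as a semiring by $f_1|_A, \dots, f_k|_A$ for some $f_1, \dots, f_k \in \mathcal M([0,1]^n)$. Since $\dim A \geq 1$, the simplex $A$ contains a segment with two distinct endpoints; restricting each $f_i$ to that segment yields a piecewise linear function of one real parameter with only finitely many breakpoints, so, intersecting the (finite) breakpoint sets of $f_1, \dots, f_k$, I would pass to a subsegment $S \subseteq A$, still with two distinct endpoints, on which all of $f_1, \dots, f_k$ are affine. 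Through the evident surjective restriction homomorphism $\mathcal M(A) \to \mathcal M(S)$, the functions $f_1|_S, \dots, f_k|_S$ then generate $\mathcal M(S)$ as a semiring.

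Next I would invoke convexity exactly as in Lemma~\ref{sequence}. The semiring operations of $\mathcal M(S)$ are $\vee = \max$ and the group operation $+$, and each generator $f_i|_S$ is affine, hence convex, so by Lemma~\ref{convex} every element of the subsemiring generated by $f_1|_S, \dots, f_k|_S$ is a convex function on $S$. On the other hand, $\mathcal M(S)$ contains functions that are not convex, precisely because $S$ is a non-degenerate segment: writing $p \neq q$ for its endpoints, choosing a coordinate $j$ with $p_j \neq q_j$ and a rational number $M/N$ (with $N \in \mathbb N$, $M \in \mathbb Z$) strictly between $p_j$ and $q_j$, the function $(N\pi_j - M) \wedge 0 \in \mathcal M([0,1]^n)$ restricts to $S$ to a map that is affine and non-constant on one part of $S$ and identically $0$ on the rest, hence has a downward corner and is not the restriction of any convex function. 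This contradiction shows that $\mathcal M(A)$ is not finitely generated as a semiring.

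The whole argument is essentially dictated by Lemmas~\ref{convex} and \ref{sequence}, and I do not expect it to require any genuinely new idea. The one place I would take care --- the main (minor) obstacle --- is the reduction to the subsegment $S$: one must ensure that $S$ does not degenerate to a single point, which is exactly where the hypothesis $\dim A \geq 1$ is used (as opposed to the case of a $0$-dimensional simplex), so that $\mathcal M(S)$ really does contain a non-convex function with which to derive the contradiction.
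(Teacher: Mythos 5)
Your proposal is correct and follows essentially the same route as the paper, which proves the corollary by observing that the argument of Lemma~\ref{sequence} applies verbatim: pass to a subsimplex (your segment $S$) on which all the alleged generators are linear, conclude via Lemma~\ref{convex} that everything they generate under $\vee$ and $+$ is convex there, and contradict this with a non-convex element of $\mathcal M(S)$. Your explicit witness $(N\pi_j-M)\wedge 0$ and the remark on where $\dim A\geq 1$ is used only make explicit what the paper leaves implicit.
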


We are now ready to start discussing the structure of additively idempotent parasemifields. We will first show that our parasemifield $S$ is a direct product of finitely many parasemifields corresponding to germs of functions at certain points.

\begin{definition}\label{germ}
Let $p$ be a point in $[0,1]^n$ and let $\mathcal P=([0,1]^n\supset P_0\supset P_1\supset\dots)$ be a sequence of complexes  
such that $\bigcap P_i=\{p\}$. Then we define the \emph{$\mathcal P$-germ of functions} at $p$ as  
$\mathcal M_{\mathcal P}(p)=\mathcal M([0,1]^n)/I$, where $I$ is the ideal corresponding to the sequence $\mathcal P$, i.e., $I$ consists of functions 
$f\in\mathcal M([0,1]^n)$ such that $f(P_i)=0$ for some $i$.
\end{definition}

\smallskip

The germ of functions at a point $p$ is exactly what it should intuitively be: it is the set of all functions viewed locally at $p$ ``in the directions given by $\mathcal P$".

\begin{proposition}\label{product}
Let $S$ be an additively idempotent parasemifield, finitely generated as a semiring. View $S$ as a (unital) $\ell$-group and let $\mathcal W=(W_0, W_1, \dots)$ be the corresponding stellar sequence, 
$\mathcal D=([0,1]^n\supset D_0\supset D_1\supset\dots)$ the corresponding sequence of complexes, and $I$ the defining ideal. 
Then $D=D(\mathcal W)=\bigcap D_i=\{d_1, \dots, d_k\}$ is finite and $S=\mathcal M([0,1]^n)/I$ is isomorphic to the 
direct product of $S_i=\mathcal M_{\mathcal D^i}(d_i)$, where $\mathcal D^i=([0,1]^n\supset D^i_0\supset D^i_1\supset\dots)$ with $D^i_j:=D_j\cap C^i$ for some fixed simplex $C^i$ containing an open neighborhood of the given point $d_i$.
\end{proposition}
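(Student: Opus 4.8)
\textbf{Proof plan for Proposition \ref{product}.}

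The plan is to exploit the two lemmas just proved — Lemma \ref{sequence} (and its Corollary \ref{simplex}) on the one hand, and Lemma \ref{surject} on the other — to first pin down the geometry of $D=\bigcap D_i$, and then to split $\mathcal M([0,1]^n)/I$ as a product of local pieces. First I would argue that $D$ is finite: since $S$ is finitely generated as a semiring and $\mathcal M(D)$ is a semiring-quotient of $S$ by Lemma \ref{surject}, $\mathcal M(D)$ is also finitely generated as a semiring. If $D$ were infinite, being a closed subset of the compact set $[0,1]^n$ it would contain a convergent sequence $a_1, a_2, \dots$ of distinct points with limit $a\in[0,1]^n$; but then Lemma \ref{sequence} says $\mathcal M(D)$ is not a finitely generated semiring, a contradiction. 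Hence $D=\{d_1, \dots, d_k\}$ is a finite set of points.

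Next I would fix, for each $i$, a simplex $C^i\subset[0,1]^n$ that contains an open neighbourhood of $d_i$ and is small enough that the $C^i$ are pairwise disjoint (possible since $D$ is finite, hence the $d_i$ are isolated and at positive mutual distance); moreover, since the $D_j$ form a decreasing sequence of complexes with intersection $D$, for $j$ large enough $D_j$ is contained in $\bigcup_i C^i$ — this uses a compactness argument: the closed sets $D_j\setminus\bigcup_i \mathrm{int}(C^i)$ are decreasing with empty intersection, so one of them is empty. Discarding the first few terms of the sequence (which does not change the ideal $I$, only re-indexes $\mathcal D$), we may assume every $D_j\subset\bigcup_i C^i$, so that $D_j$ is the disjoint union of the $D^i_j:=D_j\cap C^i$, each $D^i_j$ a complex with $\bigcap_j D^i_j=\{d_i\}$. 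Then $\mathcal D^i=([0,1]^n\supset D^i_0\supset D^i_1\supset\dots)$ is a sequence of the type appearing in Definition \ref{germ}, so $S_i=\mathcal M_{\mathcal D^i}(d_i)=\mathcal M([0,1]^n)/I_i$ is well-defined, where $I_i=\{f\mid f(D^i_j)=0$ for some $j\}$.

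It then remains to show $\mathcal M([0,1]^n)/I\simeq\prod_{i=1}^k S_i$. I would build the map $\Phi:\mathcal M([0,1]^n)/I\rightarrow\prod_i S_i$ as $f+I\mapsto (f+I_1, \dots, f+I_k)$; this is well-defined since $I\subset I_i$ (if $f$ vanishes on $D_j$ it vanishes on $D^i_j\subset D_j$) and is clearly an $\ell$-homomorphism. Injectivity: if $f\in I_i$ for every $i$, pick $j$ large enough (take the maximum over the finitely many $i$) that $f(D^i_j)=0$ for all $i$; then $f(D_j)=f(\bigcup_i D^i_j)=0$, so $f\in I$. Surjectivity is the one genuinely non-formal point: given $(f_i+I_i)_i$, I need a single $g\in\mathcal M([0,1]^n)$ with $g\equiv f_i$ near each $d_i$ in the relevant sense. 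Because the $C^i$ are disjoint compact sets, for $j$ large enough $D^i_j$ is contained in the interior of $C^i$, and I can use a piecewise-linear, integer-coefficient "partition of unity" type construction — e.g. McNaughton-style functions supported near each $C^i$ — to patch the $f_i$ together into one function $g\in\mathcal M([0,1]^n)$ agreeing with $f_i$ on $D^i_j$ for suitable $j$; then $\Phi(g+I)=(f_i+I_i)_i$. The main obstacle is exactly this surjectivity step: one must verify that the patched function can be taken inside $\mathcal M([0,1]^n)$ (integer coefficients, finitely many linear pieces) and that it matches each $f_i$ on a deep enough stage $D^i_j$ of the sequence — the disjointness of the neighbourhoods $C^i$ and standard facts about the lattice $\mathcal M([0,1]^n)$ (closure under $\vee,\wedge$ and the availability of functions vanishing outside a prescribed rational polyhedron) make this work, but it requires care.
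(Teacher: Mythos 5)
Your proposal is correct and follows essentially the same route as the paper: finiteness of $D$ via Lemma \ref{sequence} combined with the surjection of Lemma \ref{surject}, then the map $f+I\mapsto(f+I_1,\dots,f+I_k)$ with injectivity obtained from the compactness observation that $D_j\subset\bigcup_i C^i$ for large $j$. The only difference is that you explicitly isolate and sketch the surjectivity onto the product (patching the $f_i$ over the disjoint simplexes $C^i$ by a piecewise-linear integer-coefficient construction), a point the paper dispatches with ``similarly as in Lemma \ref{surject}''; your added care there is warranted and does not change the argument.
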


\begin{remark}\label{imprecise}
The formulation of Proposition \ref{product} is fairly technical, but the idea is simple. The intersection $D$ is finite and the parasemifield $S$ will decompose as a direct product of parasemifields $S_i$, each of which corresponds to a germ of functions at a point $d_i\in D$.

Note that strictly speaking, the local sequences of complexes $\mathcal D^i$ we are using do not come from a stellar sequence. This is just a technicality, though: we can modify the stellar sequence $\mathcal W$ by first deleting all the simplexes outside of $C^i$ (using suitable subdivisions) and only then continuing with the stellar transformations which created $\mathcal W$. This produces a stellar sequence $\mathcal W^i$ whose corresponding sequence of complexes is $\mathcal D^{i\prime}=([0,1]^n\supset D^{\prime}_1\supset D^{\prime}_1\supset\dots\supset D^{\prime}_k \supset D^i_1\supset D^i_2\supset\dots)$, which differs from $\mathcal D^i$ only in finitely many complexes, and so produces the same germ of functions (as defined in \ref{germ} above).
\end{remark}

\begin{proof} (of Proposition \ref{product})
Assume that $D$ is not finite. Since $D\subset [0,1]^n$, we see that $D$ has a cummulation point. Thus by Lemma \ref{sequence} it follows that $\mathcal M(D)$ is not finitely generated. By Lemma \ref{surject}, 
$\mathcal M([0,1]^n)/I=S$ surjects onto $\mathcal M(D)$, and so neither $S$ is a finitely generated semiring.

Thus $D=\{d_1, \dots, d_k\}$ is finite and we can find suitable disjoint simplexes $C^i$ containing open neighborhoods of the points $d_i$ and define $\mathcal D^i$
and $S_i=\mathcal M_{\mathcal D^i}(d_i)$ as in the statement of the proposition. Then the restriction map gives a surjection $r: S\rightarrow\prod S_i$ similarly as in Lemma \ref{surject}. 

To show that $r$ is injective, assume that $r(f)=0$ for some $f\in \mathcal M([0,1]^n)$, i.e., there is $j$ such that $f(D^i_j)=0$ for all $i$. 
We want to show that $\pi(f)=0$. Since an open neighborhood of $D=\bigcap D_i$ is contained in $\bigcup_i D^i_j$, we see that there is $k$ such that $D_k\subset \bigcup_i D^i_j$.
Thus $f(D_k)=0$, which means that $f\in I$ and $\pi(f)=0$.
\end{proof}

Therefore to finish the classification we just need to describe the structure of the germs $\mathcal M_{\mathcal D}(d)$. This will be given in terms of $\ell$-groups $G(T, v)$ associated to rooted trees,  defined in Definition \ref{G(T,v)}.

\begin{proposition}
Let $\mathcal W=(W_0, W_1, \dots)$ be a stellar sequence, $\mathcal D=([0,1]^n\supset D_0\supset D_1\supset D_2\supset\dots)$ the corresponding sequence of complexes and $D=\bigcap D_i$. Assume that  $D=\{d\}$ has one element. Then the corresponding $\ell$-group of germs of functions $G=\mathcal M([0,1]^n)/I=\mathcal M_{\mathcal D}(d)$ is either not finitely generated as a semiring or is isomorphic to an $\ell$-group $G(T, v)$ associated to a (finite) rooted tree $(T,v)$.
\end{proposition}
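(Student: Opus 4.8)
The plan is to trace through the Farey blow-up construction of the sequence $\mathcal D$ at the single point $d$, reading off a rooted tree from the combinatorics of which simplexes survive through the limit. First I would set up the local picture: by Remark \ref{imprecise} we may assume $D_0$ is (the support of) a single simplex $C$ containing $d$ in its interior, and all the stellar transformations after some finite stage are Farey blow-ups at points accumulating only at $d$ (deletions and the trivial map can only be applied finitely often before all simplexes not clustering at $d$ are gone, else $D$ would be infinite or the surviving simplexes would have positive-dimensional intersection, contradicting $D=\{d\}$ via Corollary \ref{simplex} and Lemma \ref{sequence}). So after relabelling, $\mathcal D$ is a sequence of iterated Farey blow-ups, each performed at a vertex lying in the "smallest current simplex containing $d$".

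Next I would build the tree. At each stage $i$ there is a unique minimal simplex $T_i\in D_i$ with $d\in T_i$ (minimal under inclusion among simplexes of $D_i$ containing $d$); as $i$ grows these form a nested decreasing chain $T_0\supseteq T_1\supseteq\dots$ with $\bigcap T_i=\{d\}$. A Farey blow-up at a vertex $e$ of the current complex either subdivides $T_i$ (if $e\in T_i$) into finitely many pieces, one of which becomes $T_{i+1}$, or leaves $T_i$ alone. The key geometric fact I would prove is that this process \emph{stabilizes the dimension}: either from some stage on $T_i$ has dimension $0$ (i.e. $T_i=\{d\}$, the blow-ups are eventually "away from $d$"), in which case one argues the resulting germ $\mathcal M_{\mathcal D}(d)$ is just a finite product of copies of $\mathbb Z$ built from the directions resolved along the way — this is $G(T,v)$ for a suitable tree — or $\dim T_i$ stays $\geq 1$ cofinally, and then I claim infinitely many Farey blow-ups are performed "at $d$" in the sense that $d$ is the Farey mediant (or lies on the blown-up edge in the relevant direction), producing an infinite strictly decreasing sequence of simplexes through $d$. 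In the latter case the ambient simplexes $T_i$ contain points $a_i\to d$ with the germ surjecting onto $\mathcal M(\{a_i\})$, so by Lemma \ref{sequence} the germ is not finitely generated as a semiring, which is the first alternative of the statement.

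The tree $(T,v)$ in the finitely-generated case is then assembled as follows: the root $v$ corresponds to $d$ itself (the $0$-dimensional limit), and the vertices record the successive "branches" created by the Farey blow-ups — each time a blow-up at $d$ (or at a vertex equal to $d$ after identification) splits the local cone of directions at $d$, the new pieces become children; the weight/multiplicity data is irrelevant here since only the order structure of the germ matters. One then checks that the $\ell$-group $\mathcal M_{\mathcal D}(d)$ of germs — which is a group of piecewise-linear germs at $d$ — is order-isomorphic to the lexicographically-built $G(T,v)$ of Definition \ref{G(T,v)}: a germ at $d$ is determined by its linear value along $T_0$-direction, then by a correction term visible only on the next smaller simplex $T_1$, and so on down the chain, exactly mirroring how $(g_w)\vee(h_w)$ in $G(T,v)$ is decided by the first vertex in the initial segment where the tuples disagree. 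I would verify the lattice operations match by comparing Definition \ref{G(T,v)} with the pointwise $\max$ of piecewise-linear germs, branch by branch.

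The main obstacle I expect is the dichotomy step: showing cleanly that if the dimension of the minimal simplex through $d$ does not drop to $0$, then the germ genuinely contains an accumulating sequence of evaluation points to which Lemma \ref{sequence} applies — i.e. that "infinitely many blow-ups cofinally touching a positive-dimensional simplex at $d$" forces a sequence $a_i\to d$ in $\bigcap D_i$. One must rule out the degenerate possibility that all these blow-ups happen along edges of $T_i$ not containing $d$, which would contribute only finitely much to the germ; handling this requires a careful look at where Farey mediants land relative to $d$ and an argument that, up to finitely many steps and the deletions permitted by a stellar sequence, any edge of $T_i$ through $d$ that is never subdivided leaves a positive-dimensional piece of $\bigcap D_i$, again contradicting $D=\{d\}$. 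Packaging this as a finiteness/well-foundedness statement about the tree of nested simplexes is the technical heart of the proof.
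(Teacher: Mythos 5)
There is a genuine gap, and it sits exactly at what you call the ``technical heart'': your dichotomy does not separate the finitely generated germs from the others, and the tool you invoke for the negative branch does not apply. First, ``$\dim T_i$ stays $\geq 1$ cofinally'' is compatible with the germ being finitely generated: a single stable line $\ell$ through $d$ (one along which $\ell\cap D_i$ is a segment for every $i$) keeps positive-dimensional simplexes through $d$ at every stage, yet the resulting germ is $\mathbb Z\times\mathbb Z$ with a lexicographic order, i.e.\ $G(T,v)$ for $T$ a path --- finitely generated as a semiring. Second, Lemma \ref{sequence} requires a sequence of distinct points $a_i$ lying in $D=\bigcap D_i$; here $D=\{d\}$ is a single point, so no such sequence exists, and the points $a_i\in T_i$ you propose are eventually expelled from the $D_j$'s, so the germ does \emph{not} surject onto $\mathcal M(\{a_1,a_2,\dots\})$. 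The correct obstruction to finite generation in the one-point case is not an accumulating set of evaluation points but an accumulating set of \emph{stable directions} at $d$: if infinitely many stable lines through $d$ meet a single open simplex on which all the semiring generators are linear, then every semiring-generated germ is convex there by Lemma \ref{convex}, while the germ group contains functions with an arbitrary integer slope on each line, hence non-convex ones --- a contradiction. This convexity argument is where the finite-generation hypothesis actually enters, and it is absent from your proposal (you never use the generators at all).

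Relatedly, your tree is built from the chain of minimal simplexes of $D_i$ containing $d$, which is a chain and cannot account for branching; the germ is governed by the whole star of $d$, i.e.\ by all stable subspaces through $d$. The paper instead works with the limit object directly: it discards simplexes irrelevant to the germ, partitions a neighborhood of $d$ into finitely many pairwise disjoint open simplexes on which the generators are linear and which each meet only finitely many stable lines (this finiteness is exactly the convexity step above), arranges by further subdivision that these pieces are totally ordered under closure-inclusion within each branch, and takes the resulting closure poset as the rooted tree $(T,v)$; a germ is then reconstructed vertex by vertex (value at $d$, then a slope on each stable $1$-piece, then on each $2$-piece above it, etc.), matching Definition \ref{G(T,v)}. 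Your final paragraph gestures at this reconstruction, but without the finiteness of the set of stable pieces --- which only finite generation plus convexity provides --- the tree need not be finite and the argument does not close.
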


\begin{proof}
Assume that $\mathcal M_{\mathcal D}(d)$ is finitely generated as a semiring.

To prove the proposition we shall modify the sequence $\mathcal D$ in several steps while preserving the $\ell$-group $\mathcal M_{\mathcal D}(d)$. The fairly long proof is divided into 5 steps:

\

{\it 1. Simplexes containing $d$}

First form a new sequence of complexes $\mathcal D^1=(D_0^1\supset D_1^1\supset\dots)$, where $D^1_i$ is obtained from $D_i$ by recursively removing all maximal
simplexes not containing $d$. Note that the simplexes not containing $d$ play no role in determining the germ of local functions, and so $\mathcal M_{\mathcal D^1}(d)=\mathcal M_{\mathcal D}(d)$. 
Also note that $\mathcal D^1$ is still obtained from a stellar sequence (taking into account the potential need for making modifications as in Remark \ref{imprecise}; we shall not mention this in the future).

\

{\it 2. Stable subspaces}

By a {\it stable line} in $\mathcal D^1$ we shall mean a line $\ell$ passing through $d$ such that $\ell\cap D^1_i$ is a line segment (and not just the point $d$) for each $i$. This means that while the stellar transformations which give $\mathcal D^1$ may (and will) subdivide the 1-dimensional simplex which gives a line segment lying on $\ell$, they will never delete this simplex.

Let us point out that stable lines give non-trivial elements in $\mathcal M_{\mathcal D}(d)$: the germ of linear functions on $\ell$ will lie in $\mathcal M_{\mathcal D}(d)$. A linear function on a line is determined by its slope (and value at the point $d$) and since the functions we are considering are restrictions of linear functions with integral coefficients, the set of possible slopes is $\mathbb Z$. Thus to each stable line $\ell$ corresponds a copy of $\mathbb Z\subset \mathcal M_{\mathcal D}(d)$.

Similarly for $k\geq 1$ we can define a {\it stable $k$-subspace} in $\mathcal D^1$ as a $k$-dimensional (affine) space $L$ containing $d$ such that $L\cap D^1_i$ has dimension $k$ for each $i$. (A stable 1-subspace is just a stable line.)

By the definition of stable subspaces it follows that if a simplex in $D^1_i$ intersects every stable line only in the point $d$ (and thus the same is true for the intersection with any stable subspace), then it does not contribute to $\mathcal M_{\mathcal D}(d)$. Therefore we can form $D^2_i$ and $\mathcal D^2$ by omitting all such simplexes with no non-trivial intersection with a stable line. Then $\mathcal M_{\mathcal D^2}(d)=\mathcal M_{\mathcal D}(d)$.

\

{\it 3. Simplexes defined using the generators}

By an open simplex we shall mean a point, or the interior of a $k$-simplex for $k\geq 1$.

Denote the (semiring) generators of $S=\mathcal M_{\mathcal D}(d)$ by $f_1, \dots, f_k$ (as usual, we identify a function $f\in\mathcal M([0,1]^n)$ with its
image in $\mathcal M_{\mathcal D}(d)$).  Each of these functions is piecewise linear, and so there is a finite set $\mathcal P_0$ of open simplexes which
cover $[0,1]^n$ and such that the restriction of each $f_j$ to any $P\in \mathcal P_0$ is linear. In fact, we can modify $\mathcal P_0$ to get the following
lemma:

\begin{lemma}\label{mathcal P}
There is a finite set $\mathcal P$ of open simplexes such that

(i) elements of $\mathcal P$ are pairwise disjoint,

(ii) the restriction $f_j|P$ is linear for all $j$ and all $P\in\mathcal P$,

(iii) $\dim P\cap D_i^2=\dim P$ for all $i$ and all $P\in\mathcal P$,

(iv) $P\cap \ell=\emptyset$ for all stable lines $\ell$ and all $P\in\mathcal P$ with $\dim P>1$,

(v) for each $e$ and each $P\in\mathcal P$ with $\dim P>e$ there is exactly one $Q\in\mathcal P$ such that $\dim Q=e$ and $Q\subset \bar{P}$ 
($\bar{P}$ denotes the closure in $\mathbb R^n$),

(vi) $\mathcal M_{\mathcal D^3}(d)=\mathcal M_{\mathcal D}(d)$, where
$\mathcal D^3=\mathcal D^2\cap U =(D^2_0\cap U\supset D^2_1\cap U\supset\dots)$
and $U=\bigcup_{P\in\mathcal P} P$.
\end{lemma}

Note that the simplexes  $P\in\mathcal P$ from the lemma can be viewed as a refinement of the notion of stable subspaces.

\begin{proof}
We shall modify $\mathcal P_0$ recursively in several steps while making sure that $\mathcal M_{\mathcal D^2\cap U}(d)$ remains unchanged and 
equal to $S=\mathcal M_{\mathcal D}(d)$ (this is clearly true at the beginning as $\bigcup_{P\in\mathcal P_0} P=[0,1]^n$).

To start, let $\mathcal P=\mathcal P_0$. Now recursively repeat the following set of modifications:

1. For determining $S$ are relevant only those simplexes $P\in\mathcal P$ which have non-empty intersection with infinitely many (and hence all) of the $D_i^2$. 
Hence we can delete all other $P$ from $\mathcal P$. Continue to Step 2.

2. If there is $P\in\mathcal P$ and finitely many open simplexes $S_1, \dots, S_a\subset P$ such that $\dim S_i<\dim P$ and 
$P\backslash(S_1\cup\dots\cup S_a)$ has non-empty intersection with all $D^2_i$, then replace $P$ by $S_1, \dots, S_a$ in $\mathcal P$.  
Return to Step 1 if $\mathcal P$ has been modified, 
else continue to Step 3. 

Note that $\mathcal P$ has finitely many elements at any time and dimensions of elements of $\mathcal P$ are decreasing, so this step will happen only finitely many times. Also note that after being done with steps 1 and 2, $\mathcal P$ contains only open simplexes $P$ with $\dim(P\cap D_i^2)=\dim P$ for all $i$.

3. Assume that $d\neq P\in\mathcal P$ has non-empty intersection with infinitely many stable lines. Arguing as in the proof of Lemma \ref{sequence}
we see that $S$ is then not finitely generated as a semiring, a contradiction: Namely, any function $f$ which is linear along finitely many of these lines 
(and suitably defined at the other lines) will be non-trivial in $S$. By considering the set of slopes of $f$ along these lines, it's easy to construct
a function $f\in S$ which will not be convex on $P$. But this contradicts Lemma \ref{convex} as all the semiring generators are linear on $P$.
Thus every $d\neq P\in\mathcal P$ has non-empty intersection with only finitely many stable lines.

Suppose that $\dim P>1$, $\ell$ is a stable line and $\ell\cap P\neq\emptyset$. Choose then a $(\dim P-1)$-dimensional hypersurface $H$ containing $\ell$ and subdivide $P$ along this hypersurface, i.e., $P=(P\cap H)\cup (P\backslash H)$ and $P\backslash H$ has two connected components, $P_1$ and $P_2$. We can choose $H$ so that 
$P\cap H, P_1$, and $P_2$ are all open simplexes;
in $\mathcal P$ then replace $P$ by $P\cap H$, $P_1$ and $P_2$.

After doing this finitely many times (because for each $P$ there are only finitely many stable lines), we arrive at $\mathcal P$ satisfying property (iv). Return to Step 1 if $\mathcal P$ has been modified, 
else continue to Step 4. 

4. Assume that there are $P, Q, R\in\mathcal P$ such that $Q, R\subset \bar{P}$ and $Q\not\subset \bar R$ and $R\not\subset \bar Q$. 
Take such a $P$ of the smallest dimension. Since $Q$ and $R$ are disjoint, 
we can again subdivide $P$ by a $(\dim P-1)$-dimensional hypersurface $H$ as above so that ($Q\subset \bar{P_1}$ and $R\subset \bar{P_2}$)
or ($Q\subset \bar{P_1}$ and $R\subset \bar{P\cap H}$) or ($R\subset \bar{P_1}$ and $Q\subset \bar{P\cap H}$)
(and replace $P$ in $\mathcal P$ by $P\cap H$, $P_1$ and $P_2$).

After doing this finitely many times (because for each $P$ such a situation can occur only finitely many times), we arrive at $\mathcal P$ satisfying property (v): We have just ensured the uniqueness of such $Q$, its existence easily follows from the fact that $\dim P\cap D_i^2=\dim P$.

Return to Step 1 if $\mathcal P$ has been modified, else we are done. 

\

Note that the whole algorithm terminates after finitely many steps and that (i) -- (vi) are satisfied at the end, completing the proof of Lemma \ref{mathcal P}.
\end{proof}

\

{\it 4. Construction of the tree $T$}

Now we can easily construct a rooted tree $(T, v)$ attached to the sequence $\mathcal M_{D^3}(d)$ obtained using Lemma \ref{mathcal P}:
Associate a vertex $v_P$ to each $P\in\mathcal P$, there will be an edge connecting vertices $v_P$ and $v_Q$ if and only if
($P\subset \bar Q$ and $\dim P=\dim Q-1$) or ($Q\subset \bar P$ and $\dim Q=\dim P-1$). The vertex $v_d$ is the root $v$.

By Lemma \ref{mathcal P} we see that $(T, v)$ is a (connected) rooted tree.

\

{\it 5. Description of $M_{\mathcal D}(d)$}

The germ of a function $f$ in $\mathcal M_{\mathcal D^3}(d)=M_{\mathcal D}(d)$ can have any value at $d$, which gives the $\mathbb Z_v$ at the root $v$ of the tree $T$.

Given $f\in \mathcal M_{\mathcal D^3}(d)$, choose a small ball $B$ containing $d$ so that the restriction of $f$ to $B\cap r$ is linear for all rays $r\ni d$. Since we are considering
only the germ of functions at $d$, $f$ is uniquely determined by $f|B$ as an element of $\mathcal M_{\mathcal D^3}(d)=M_{\mathcal D}(d)$.

Take a 1-simplex $P\in\mathcal P$. The value of $f$ at the endpoint $d$ of $\bar P$ has already been selected, and so the restriction $f|(P\cap B)$
(which is linear by Lemma \ref{mathcal P}, (ii)) is uniquely 
determined by its value at any point $p\in P\cap B$. The choice of this value gives the $\mathbb Z_{v_P}$ at the vertex $v_P$ of the tree $T$.

After having dealt with all the 1-simplexes, take a 2-simplex $P\in\mathcal P$. There is a unique 1-simplex $Q\in \mathcal P$, $Q\subset \bar P$; 
$f|(Q\cap B)$ has already been determined, and so the restriction $f|(P\cap B)$ is uniquely 
determined by its value at any point $p\in P$. The choice of this value gives the $\mathbb Z_{v_P}$ at the vertex $v_P$ of the tree $T$.

We can continue in this way, successively dealing with simplexes of larger and larger dimensions, until we have covered the whole tree $T$ and 
uniquely determined the function $f|B$ as an element of $\mathcal M_{\mathcal D^3}(d)=M_{\mathcal D}(d)$.

Now it is straightforward to check that the $\ell$-group $M_{\mathcal D}(d)$ is exactly $G(T, v)$.
\end{proof}

Together with Proposition \ref{product}, this finishes the proof of Theorem \ref{classify}, except for the uniqueness part. 
This follows from the proof and from the uniqueness statement of \cite{BCM}, Corollary 5.4.
However, for the sake of completeness and to make sure that our classification is indeed independent of the choice of order-unit in Theorem \ref{unital}, let us give (a sketch of) a direct proof.

Assume that $G=\prod_{i=1}^k G(T_i, v_i)$ is one of our $\ell$-groups from Theorem \ref{classify}, given abstractly as an $\ell$-group $(G, +, -, 0, \vee, \wedge)$, i.e., without specifying the corresponding rooted tree structure (or the order-unit). To make the notation more uniform, we consider the disjoint union $F$ of the rooted trees $(T_i, v_i)$ as a ``rooted forest" 
$\mathcal F=(F, v_1, v_2, \dots, v_k)$.

We shall show how to reconstruct this rooted forest $\mathcal F$ from $G$, which will then imply the uniqueness statement of the theorem. 

First let's introduce some notation for the ``standard" basis" of $\prod_{i=1}^k G(T_i, v_i)$. 
In Definition \ref{G(T,v)} we have attached a copy $\mathbb Z_w$ of the additive group of integers to each vertex $w$. 
Denote by $b(w)$ the element of $G$ which corresponds to $1\in\mathbb Z_w$, i.e., $b(w)=(g_v)_{v\in F}$ is the tuple with $g_w=1$  and $g_v=0$ if $v\neq w$. Note that by definition we have $b(w)>0$.
We shall say that an element $g\in G$ is \emph{infinitesimally smaller} than $h\in G$ if $ng<h$ for all $n\in\mathbb Z$ and denote this by $g\ll h$. We say that an element $g\in G$ is \emph{infinitesimal} if $g\ll h$ for some $h\in G$.

Let us now try to identify the basis elements $b(v)$ corresponding to roots $v=v_i$.
Define $B_0=\{g_1, \dots, g_m\}$ as a maximal set of elements which satisfy all of the following properties for all pairs $i\neq j$:
\begin{itemize}
\item $g_i>0$
\item $g_i$ is not infinitesimal
\item $g_i$ is not a sum of positive non-infinitesimal elements
\item $g_i\vee g_j=g_i+g_j$
\end{itemize}

Considering the elements $g_i$ as linear combinations of the basis $b(w)$, it is easy to see that $k=m$ and that there are infinitesimal elements $h_i\ll g_i$ such that 
$\{g_1+h_1, \dots, g_k+h_k\}=\{b(v_1), \dots, b(v_k)\}$. After permuting the indices if necessary, we can assume that $g_i+h_i=b(v_i)$.
Hence upto the infinitesimal elements $h_i$, we see that $B_0$ is the set of basis elements corresponding to the roots $v_i$.

Now for $i=1, \dots, k$ define $G_i=\{g\in G\mid g\ll g_i\}$. This is an $\ell$-subgroup of $G$ isomorphic to the $\ell$-group $G(\mathcal F_i)$ attached to a rooted forest $\mathcal F_i$, which is obtained by removing 
the root $v_i$ from the tree $T_i$ and designating the vertices $v\in N(v_i)$ (i.e., those that are connected to $v_i$ by an edge in $T_i$) as the roots of the trees in the forest $\mathcal F_i$.

We can now proceed in the same way with each $G_i$ and define a set $B_i$ of elements that correspond to basis elements $b(v)$, $v\in N(v_i)$ (again upto elements that are infinitesimally smaller).

Proceeding by induction in this fashion, we eventually define an element $g_w$ for each $w\in F$
so that the set $\{g_w\mid w\in F\}$ with the ordering $\ll$ is isomorphic to the rooted forest $\mathcal F$ (viewed as an ordered set whose maximal elements are the roots).
Note that we have defined the ordered set $\{g_w\mid w\in F\}$ intrinsically, without referring to the forest $\mathcal F$ (or the chosen order-unit).  

Assume now that $\prod_{i=1}^k G(T_i,v_i)$ and $\prod_{j=1}^{k^\prime} G(T_j^\prime,v_j^\prime)$ are isomorphic $\ell$-groups. As above, we can attach to them rooted forests 
$\mathcal F$ and $\mathcal F^\prime$, respectively, which then have to be isomorphic rooted forests. This proves the uniqueness statement of Theorem \ref{classify}.
\hfill\qed

\label{endproof}

\

Let us note that as a group, each $G(T_i, v_i)$ is just $\mathbb Z^{n_i}$ for some $n_i$, and so we obtain the following corollary to Theorem \ref{classify}:

\begin{corollary}
If an additively idempotent parasemifield is finitely generated as a semiring $S(+, \cdot)$, then it is
finitely generated as a group $S(\cdot)\simeq\mathbb Z^n$.
\end{corollary}

We are not aware of any more direct or elementary  proof of this surprising fact. It would certainly be very interesting to obtain one.

\def\bibname{Bibliography}


\begin{thebibliography} {99}

\bibitem{AF}
\rm M. Anderson and T. Feil, 
\it Lattice-Ordered Groups, 
\rm  Reidel Texts in the Mathematical Sciences, {\bf 1988}.

\bibitem{BHJK}
\rm R.~El~Bashir, J.~Hurt, A.~Jan\v ca\v r\'\i k and T.~Kepka,
\it Simple commutative semirings,
\rm Journal of Algebra {\bf 236} (2001), 277--306.

\bibitem{BDN-F}
\rm L. P. Belluce and A. Di Nola, 
\it Yosida type representation for perfect MV-algebras,
\rm Math. Logic Quart. {\bf 42} (1996), 551--563. 

\bibitem{BDN-C}
\rm L. P. Belluce and A. Di Nola,
\it Commutative rings whose ideals form an MV-algebra,
\rm MLQ Math. Log. Q. {\bf 55} (2009), 468--486.

\bibitem{BDNF-B}
\rm L. P. Belluce, A. Di Nola and A. R. Ferraioli,
\it MV-semirings and their sheaf representations, 
\rm Order {\bf 30} (2013), 165--179.

\bibitem{BDNF}
\rm L. P. Belluce, A. Di Nola and A. R. Ferraioli,
\it Ideals of MV-semirings and MV-algebras, 
\rm Contemp. Math. {\bf 616} (2014), 59--75. 

\bibitem {BDN-E} 
\rm L. P. Belluce, A. Di Nola and G. Georgescu,
\it Perfect MV-algebras and l-rings,
\rm J. Appl. Non-Classical Logics {\bf 9} (1999), 159--172. 

\bibitem{BCM}
\rm M. Busaniche, L. Cabrer and D. Mundici, 
\it Confluence and combinatorics in finitely generated unital lattice-ordered abelian groups, 
\rm Forum Math. {\bf 24} (2012), 253--271.

\bibitem{CM} 
\rm L. Cabrer and D. Mundici, 
\it Rational polyhedra and projective lattice-ordered abelian groups with order unit,
\rm Commun. Contemp. Math. {\bf 14} (2012), no. 3, 1250017, 20 pp.

\bibitem{DNG}
\rm A. Di Nola and B. Gerla, 
\it Algebras of Lukasiewicz's logic and their semiring reducts,
\rm Contemp. Math. {\bf 377} (2005), 131--144.

\bibitem{Ewald}
\rm G. Ewald, 
\it Combinatorial Convexity and Algebraic Geometry, 
\rm Springer-Verlag, New York, {\bf 1996}.

\bibitem{GRS}
\rm B. Gerla, C. Russo and L. Spada, 
\it Representation of perfect and local MV-algebras,
\rm Math. Slovaca {\bf 61} (2011), 327--340.

\bibitem{GH}
\rm A. M. W. Glass and W. C. Holland, 
\it Lattice-Ordered Groups, 
\rm Kluwer Academic Publishers, {\bf 1989}.

\bibitem{G}
\rm J. S. Golan, 
\it Semirings and Their Applications, 
\rm Kluwer Academic, Dordrecht {\bf 1999}.

\bibitem{JKK}
\rm J. Je\v zek, V. Kala and T. Kepka, 
\it Finitely generated algebraic structures with various divisibility conditions, 
\rm Forum Math. {\bf 24} (2012), 379--397.

\bibitem{disertace}
\rm V.~Kala,
\it Algebraic Substructures in $\mathbb C^m$,
\rm PhD Thesis, Charles University, Prague, Czech Republic, {\bf 2013}, v+40 pp.

\bibitem{KK}
\rm V.~Kala and T.~Kepka,
\it A note on finitely generated ideal-simple commutative semirings,
\rm Commentationes Math. Univ. Carolinae {\bf 49} (2008), 1--9.

\bibitem{KKK} 
\rm V. Kala, T. Kepka and M. Korbel\'a\v r, 
\it Notes on commutative parasemifields, 
\rm Comment. Math. Univ. Carolin. {\bf 50} (2009), 521--533.

\bibitem{KaKo1}
\rm V. Kala and M. Korbel\'a\v r, 
\it Congruence simple subsemirings of $\mathbb Q^+$, 
\rm Semigroup Forum \textbf{81} (2010), 286--296.

\bibitem{KaKo} 
\rm V. Kala and M. Korbel\'a\v r, 
\it Commutative parasemifields with three polynomial generators, 
\rm preprint.

\bibitem{litvinov}
\rm G. L. Litvinov,
\it The Maslov dequantization, idempotent and tropical mathematics: a brief introduction,
\rm arXiv:math/0507014.

\bibitem{monico}
\rm C. J. Monico,
\it Semirings and semigroup actions in public-key cryptography,
\rm PhD Thesis, University of Notre Dame, USA, {\bf 2002}, vi+61 pp.

\bibitem{M}
\rm D. Mundici, 
\it Interpretation of AF $C^*$-algebras in \L ukasiewicz sentential calculus, 
\rm J. Funct. Anal., {\bf 65} (1986), 15--63.

\bibitem{NL} 
\rm A. Di Nola and A. Lettieri, 
\it Perfect MV-Algebras are Categorically Equivalent to Abelian $\ell$-Groups, 
\rm Studia Logica, {\bf 53} (1994), 417--432.

\bibitem{W} 
\rm H. J. Weinert, 
\it \" Uber Halbringe und Halbk\" orper. I., 
\rm Acta Math. Acad. Sci. Hungar. {\bf 13} (1962), 365--378. 

\bibitem{WW}
\rm H.~J.~Weinert and R.~Wiegandt,
\it On the structure of semifields and lattice-ordered groups,
\rm Period. Math. Hungar. {\bf 32} (1996), 147--162.

\bibitem{Yang}
\rm Y. Yang,
\it $\ell$-Groups and B\' ezout Domains,
\rm PhD Thesis, Universit\" at Stuttgart, Germany, {\bf 2006}, viii+116 pp.

\bibitem{zumbragel}
\rm J. Zumbr\" agel,
\it Public-key cryptography based on simple semirings,
\rm PhD Thesis, Universit\" at Z\" urich, Switzerland, {\bf 2008}, x+99 pp.



\end{thebibliography}
\end{document}